\theoremstyle{plain}
\newtheorem{Th}{Theorem}[section]
\newtheorem{Lemma}[Th]{Lemma}
\theoremstyle{definition}
\newtheorem{Def}[Th]{Definition}
\newtheorem{Rem}[Th]{Remark}
\newtheorem{?}[Th]{Problem}
\newtheorem{Ex}[Th]{Example}
\newcommand{\im}{\operatorname{im}}
\numberwithin{equation}{section}
				\string\usetikzlibrary{decorations.markings} to use arrows with markings}{}}{}%
\newcommand{\inv}{^{-1}}
\newcommand{\PP}{\mathbb{P}}
\newcommand{\ZZ}{{\mathbb Z}}
\newcommand{\NN}{{\mathbb N}}
\newcommand{\QQ}{{\mathbb Q}}
\newcommand{\CC}{{\mathbb C}}
\newcommand{\Drm}{\mathrm{D}}
\newcommand{\rH}{\mathrm{H}}
\newcommand{\Hrm}{\mathrm{H}}
\DeclareMathOperator{\gr}{gr}
\DeclareMathOperator{\Proj}{Proj}
\DeclareMathOperator{\Tot}{Tot}
\DeclareMathOperator{\coker}{coker}
\DeclareMathOperator{\Aut}{Aut}
\DeclareMathOperator{\Spec}{Spec}
\newcommand{\lpr}{\,^\prime}
\newcommand{\lprpr}{\,^{\prime\prime}}
\newcommand{\End}{\textrm{End}}
\newcommand{\Hom}{\mathrm{Hom}}
\newcommand{\Oc}{\mathcal{O}}
\newcommand{\Ac}{\mathcal{A}}
\newcommand{\Ic}{\mathcal{I}}
\newcommand{\sbe}{\subseteq}
\newcommand{\spe}{\supseteq}
\newcommand{\OU}{\mathcal{O}_{U}}
\newcommand{\vWv}{\vert W \vert}
\DeclareMathOperator{\id}{id}
\DeclareMathOperator{\Homology}{H}
\author{Philipp Licht}
\address{Philipp Licht \\
	Institut f\"{u}r Mathematik\\
	Johannes Gutenberg-Universit\"{a}t Mainz\\
	Staudingerweg 9, 55099 Mainz\\
	Germany.}
\email{plicht05@uni-mainz.de}
\keywords{infinitesimal Torelli, automorphisms, weighted complete intersections, Fano threefolds}
\subjclass[2010]
{14C34, %Torelli Problem 
(14J45,  %Fano Varieties
14M10, %Complete intersections 
14J30, % 3-folds
32Q45)} %hyperbolicity
\begin{document}

\title[Infitesimal Torelli for w.c.i. and certain Fano threefolds]{Infinitesimal Torelli for weighted complete intersections and certain Fano threefolds}

\maketitle

\begin{abstract}
	We generalize the classical approach of describing the infinitesimal Torelli map in terms of multiplication in a Jacobi ring to the case of quasi-smooth complete intersections in weighted projective space. As an application, we prove the infinitesimal Torelli theorem for hyperelliptic Fano threefolds of Picard rank 1, index 1, degree 4 and study the action of the automorphism group on cohomology. The results of this paper are used to prove Lang-Vojta's conjecture for the moduli of such Fano threefolds in a follow-up paper.
\end{abstract}

%%%%%%%%%%%%%%%%%%%%%%% 
% 	Introduction
%%%%%%%%%%%%%%%%%%%%%%%

\section{Introduction}

The Torelli problem asks the question if given a family of varieties, whether the period map is injective, i.e., if the variety is uniquely determined by its Hodge structure.  
This question has first been studied for curves; see \cite{Andreotti1958}. 
The infinitesimal Torelli problem is the more general question that asks whether the period map has an injective differential.
The problem can be formulated very concretely for a smooth projective variety $X$ over $\CC$ of dimension $n$. 
Namely, we say that $X$ \emph{satisfies infinitesimal Torelli} if the map 
$$
\Homology^1(X,\Theta_X^1) \to \bigoplus_{p+q = n} \Hom_\CC\left( \Hrm^p(X,\Omega_X^q), \Hrm^{p+1}(X,\Omega_X^{q-1}) \right)
$$
induced by the contraction map is injective. In addition to curves, whether this holds has been studied among others for the following types of varieties:

\begin{itemize}
	\item hypersurfaces in projective space \cite{CaG80, Don83}
	\item hypersurfaces in weighted projective space \cite{Sai86}
	\item complete intersections in projective space \cite{Peters1976, Ter90, Usui1976}
	\item zerosets of sections of vector bundles \cite{Flenner1986}
	\item certain cyclic covers of a Hirzebruch surface \cite{Konno1985}
	\item complete intersections in certain homogeneous Kähler manifolds \cite{Konno1986}
	%\item projective varieties with divisible canonical class \cite{Kii1978}
	\item some weighted complete intersections \cite{Usui1977}
	\item certain  Fano  quasi-smooth weighted hypersurfaces \cite{Fatighenti2019}
	\item some elliptic surfaces \cite{Kii1978, Kloosterman2022, SaitoElliptic}
\end{itemize}

The methods used in many of these studies have in common that they describe the cohomology groups relevant for the infinitesimal Torelli map as components of a so-called Jacobi ring
and argue that the map can be interpreted as multiplication by some element in this ring. We generalize this method to the case of quasi-smooth complete intersections in weighted projective space. Following \cite{Dol82}, we introduce the terminology: 

\begin{Def}\label{d:wci}
	Let $ k$ be a  field. For $ W=(W_0,\dots,W_n) \in \NN^{n+1}$ a tuple of positive integers, 
	let $ S_W = k[x_0,\dots,x_n] $ be the graded polynomial algebra with $ \deg(x_i) = W_i $.
	We define \emph{weighted projective space over $k$ with weights} $W$ to be $ \PP(W) = \Proj S_W. $ Given $d = (d_1,\dots,d_c) \in \NN^{c} $, $c \leq n$, a closed subvariety $ X \sbe \PP(W)$ is a \emph{complete intersection of degree $d$} if it has codimension $c$ and is given as the vanishing locus of homogeneous polynomials 
	$ f_1,\dots,f_c \in S_W $ with $\deg(f_i)= d_i$. A weighted complete intersection $X = V(f_1,\dots,f_c) \sbe\PP(W)$ is \emph{quasi-smooth} if its \emph{affine cone} 
	$ A(X) = \Spec S_W/(f_1,\dots,f_c) \setminus\{0\} $ is smooth.
\end{Def}

Given a quasi-smooth weighted complete intersection $X$ over $\CC$ as above, we can define generalized sheaves of differentials $ \tilde{\Omega}^q_{X} $ (see Section allowing a decomposition 
$$
\Hrm^{n-c}(X,\CC) = \bigoplus_{p+q = n-c} \Hrm^p(X,\tilde{\Omega}_X^q)
$$  
that coincides with the usual Hodge decomposition in case $ X $ is smooth; see Theorem \ref{t:hodge}.
Consider the polynomial $ F = y_1 f_1 + \dots + y_c f_c  \in \CC[x_0,\dots,x_{n},y_1,\dots,y_c] $, which is homogeneous with respect to the bigrading given by $ \deg(x_i) = (0,1) $ and $ \deg(y_j) = (1,-d_j) $. 
The \emph{Jacobi ring associated to the complete intersection $X$} is the bigraded ring 
$$
R = \CC[x_0,\dots,x_{n},y_1,\dots,y_c]/(\partial_{x_0}F,\dots,\partial_{x_{n}}F,\partial_{y_1}F,\dots,\partial_{y_c}F).
$$
\subsection*{Main results}

Our first main result can be interpreted as giving an explicit description of the differential of the period map associated to a quasi-smooth weighted complete intersection in terms of its Jacobi ring. 

\begin{Th}\label{t:inf_torelli_map_for_wci}
	Let $ X = V_+(f_1, \dots, f_c) \allowbreak \sbe \PP_\CC(W_0,\dots,W_n) $ be a quasi-smooth weighted complete intersection of degree $(d_1,\dots,d_c)$ with tangent sheaf $\Theta_X^1$ of dimension $ \dim(X) =n-c > 2 $. Let $R$ be the associated Jacobi ring.
	Let $ \nu = \sum W_i - \sum d_j $. For all integers $p \in \ZZ $ with $ 0<p<n-c$ and $p\neq n-c-p$, there are isomorphisms 
	$$
	\Hrm^{n-c-p}(X,\tilde{\Omega}_X^p) \cong \Hom_\CC(R_{p,-\nu},\CC)
	$$
	and 
	$$
	\Hrm^1(X,\Theta_X^1) \cong R_{1,0}.
	$$
	Under these isomorphisms, the contraction map
	$$
	\Hrm^1(X,{\Theta}^1_X) \to \Hom(\Hrm^{n-c-p}(X,\tilde{\Omega}_X^p),\Hrm^{n-c-p+1}(X,\tilde{\Omega}_X^{p-1})) 
	$$
	is the map
	$$
	R_{1,0} \to \Hom(\Hom_\CC(R_{p,-\nu},\CC),\Hom_\CC(R_{p-1,-\nu},\CC)) = \Hom( R_{p-1,-\nu}, R_{p,-\nu})
	$$
	that sends $ \alpha \in R_{1,0}$ to the multiplication-by-$\alpha$ map.
\end{Th}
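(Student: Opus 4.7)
The plan is to follow the classical Griffiths--Carlson--Terasoma--Saito strategy, adapted to the weighted complete intersection setting, in three steps. First, building on the generalized Hodge decomposition of Theorem~\ref{t:hodge}, I would realize $\Hrm^{n-c-p}(X,\tilde\Omega_X^p)$ via a pole-order/residue filtration. The key device is the Cayley trick: the complete intersection data $(f_1,\dots,f_c)$ is replaced by the single bihomogeneous polynomial $F=\sum_j y_j f_j$, and one studies residues of forms with poles along $\{F=0\}$ in a suitable total space. Following the treatment of weighted hypersurfaces by Steenbrink, Dolgachev and Saito, these residues match graded pieces of $\CC[x,y]/(\partial F)$, with the bigrading tracking both the $y$-Koszul direction and the ``pole-order'' direction. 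Serre duality on $X$ then yields the claimed $\Hrm^{n-c-p}(X,\tilde\Omega_X^p)\cong \Hom_\CC(R_{p,-\nu},\CC)$, with the shift $-\nu=\sum d_j-\sum W_i$ arising from the canonical class of $X$.

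Second, to obtain $\Hrm^1(X,\Theta_X^1)\cong R_{1,0}$, I would combine the (weighted) Euler sequence on $\PP(W)$ restricted to $X$ with the normal bundle sequence
$$
0\to \Theta_X^1\to \Theta_{\PP(W)}^1|_X\to \bigoplus_{j=1}^c \Oc_X(d_j)\to 0
$$
and chase the long exact sequences in cohomology. Using the vanishings for $\tilde\Omega$-sheaves on quasi-smooth complete intersections that are available from the earlier sections of the paper, the relevant connecting map has cokernel which is visibly the $(1,0)$-piece of $R$: deformation directions $\partial_{x_i}$ modulo Jacobi relations, ``twisted'' by the Koszul variables $y_j$ that encode infinitesimal deformations of the defining equations.

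Third, and this is the substance of the theorem, I would verify that under these identifications the contraction map coincides with Jacobi multiplication by working at the level of \v{C}ech cochains on a fixed affine covering of $X$: an element $\alpha\in\Hrm^1(X,\Theta_X^1)$ corresponding to $[\alpha]\in R_{1,0}$ lifts to a first-order deformation of $(f_1,\dots,f_c)$ whose Kodaira--Spencer class has an explicit cocycle representative, and a class in $\Hrm^{n-c-p}(X,\tilde\Omega_X^p)$ is represented by a Griffiths-type residue of a form built from $F^{-(p+1)}\Omega$. Computing the cup product $\alpha\smile(\cdot)$ reduces, via a Leibniz manipulation and the defining relations $\partial_{x_i}F,\partial_{y_j}F$ of $R$, to polynomial multiplication modulo those relations; dualizing via the Serre-duality pairing from Step~1 then turns the induced map $\Hom_\CC(R_{p,-\nu},\CC)\to \Hom_\CC(R_{p-1,-\nu},\CC)$ into multiplication by $\alpha$ on $R_{p-1,-\nu}$, as claimed. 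I expect this last step to be the main obstacle: the bookkeeping of signs, weights and bidegrees in the residue--\v{C}ech computation, together with the compatibility with the Serre-duality pairing, is delicate. A secondary difficulty is the hypothesis $p\neq n-c-p$, which precisely avoids the middle-dimensional primitive cohomology, where the Jacobi-ring identification would require an additional primitivity correction.
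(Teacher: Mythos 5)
Your plan is the classical geometric route (Cayley trick plus residues plus Serre duality), which is genuinely different from what the paper does --- indeed it is the route the paper explicitly sets out to avoid: since the ambient space is singular in the weighted case, the paper works purely algebraically on the smooth punctured affine cone $U$, builds Lebelt-type free resolutions $K_p^\bullet$ of $\Omega_U^p$ and a two-term resolution of $\Theta_U^1$, extends the contraction pairing to these resolutions and to their \v{C}ech total complexes, and compares the two filtration spectral sequences (Theorem \ref{t:spectral pairing}, Lemma \ref{l:contraction_is_multiplication}) before descending from $U$ to $X$ via the Euler-sequence lemmas of Flenner (Lemmas \ref{l:euler}, \ref{l:Flenner}, \ref{l:torelli cohomology groups}). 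So the comparison is: your approach trades the paper's homological bookkeeping for geometric input on an auxiliary hypersurface, but that input is not available off the shelf here.

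Concretely, the gaps are at the decisive points. (a) In Step 1 the natural ambient for $V(F)$ is not a weighted projective space but a simplicial toric quotient of $(\AA^{n+1}\setminus\{0\})\times(\AA^{c}\setminus\{0\})$ by a two-dimensional torus; you would need a residue/Jacobian-ring description for quasi-smooth hypersurfaces in such a toric variety, a check that $V(F)$ is quasi-smooth when $X$ is, and a Terasoma-type theorem transferring the (primitive) Hodge structure and its infinitesimal variation from $V(F)$ to $X$ --- none of this is quoted or proved in your sketch, and it is exactly the difficulty the paper circumvents. (b) In Step 2 the Euler and normal-bundle sequences are not exact as written: $\Theta^1_{\PP(W)}$ is not locally free and $X$ in general meets the singular locus, so one must work with the reflexive sheaves $\tilde{\Omega}^p_X$ and $\Theta^1_X=(\tilde{\Omega}^1_X)^*$ (Remark \ref{r:theta tilde}) or on the smooth cone, where the conormal sequence is honestly exact; you also need $\Hrm^1(X,\Oc_X)=\Hrm^2(X,\Oc_X)=0$, which is where the hypothesis $n-c>2$ enters and which your sketch does not locate. (c) Step 3, the identification of contraction with Jacobi multiplication compatibly with the duality of Step 1, is the actual content of the theorem; you rightly flag it as the main obstacle, but ``a Leibniz manipulation'' is not an argument --- in the paper this compatibility occupies Sections 3 through 7 (pairings of filtered complexes inducing pairings of spectral sequences for both filtrations of the \v{C}ech total complex). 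On the positive side, your reading of the hypothesis $p\neq n-c-p$ is correct: in the middle case the Hodge group acquires an extra summand $\CC$ from the non-primitive class (Lemma \ref{l:torelli cohomology groups}), so the stated isomorphism would fail there without a primitivity correction.
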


Our second main result is an application of this theorem to prove the infinitesimal Torelli theorem for (smooth) Fano threefolds of Picard rank $1$, index $1$ and degree $4$.
By Iskovskikh's classification, there are two types of such varieties; see \cite[Table~3.5]{Isk80}.
The varieties of the first type are smooth quartics in $ \PP^4 $. For smooth hypersurfaces in projective space, the infinitesimal Torelli problem is completely understood. In particular, smooth quartic threefolds satisfy infinitesimal Torelli; see \cite{CaG80}. 
The second type of Fano threefolds with Picard rank $1$, index $1$, degree $4$ are called \emph{hyperelliptic}; each such Fano threefold $X$ is a double cover of a smooth quadric $Q \sbe \PP^4 $ ramified along a smooth divisor of degree $8$ in $Q$. Such a double cover comes naturally with an involution $ \iota $ associated to the double cover. 
It turns out that such hyperelliptic Fano threefolds do \textbf{not} satisfy infinitesimal Torelli, i.e., the period map on the moduli of Fano threefolds of Picard rank $1$, index $1$ and degree $4$ does not have an injective differential. However, the following result says that the "restricted" period map on the locus of hyperelliptic Fano threefolds does have an injective differential.

\begin{Th}[Infinitesimal Torelli for hyperelliptic Fano threefolds]\label{t:inf torelli for hyperelliptic Fanos} 
	Let $ X$ be a hyperelliptic (smooth) Fano threefold of Picard rank $1$, index $1$ and degree $4$ over $\CC$. Let $ \iota \in \Aut(X) $ be the involution. Then the $ \iota $-invariant part of the infinitesimal Torelli map 
	$$
	\Homology^1(X,\Theta_X)^\iota \to \bigoplus_{p+q = 3} \Hom_\CC\left( \Hrm^p(X,\Omega_X^q), \Hrm^{p+1}(X,\Omega_X^{q-1}) \right)
	$$
	is injective. 
\end{Th}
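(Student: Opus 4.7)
The plan is to realize $X$ as a quasi-smooth weighted complete intersection and invoke Theorem \ref{t:inf_torelli_map_for_wci}. Since $X \to Q \subseteq \PP^4$ is a double cover ramified along a smooth divisor of degree $8$ in $Q$, and since $H|_Q$ has degree $2$ so such a divisor is cut out by a quartic $g \in \CC[x_0,\dots,x_4]_4$, one realizes $X = V_+(f_1, f_2) \subseteq \PP(1,1,1,1,1,2)$ with $f_1 = 0$ defining $Q$ and $f_2 = x_5^2 - g$ (where $x_5$ has weight $2$). Smoothness of $X$ is equivalent to quasi-smoothness of this weighted complete intersection, and the formulas $\omega_X \cong \calO_X(\sum d_j - \sum W_i) = \calO_X(-1)$ together with $(-K_X)^3 = d_1 d_2 / \prod W_i = 4$ confirm the numerical type. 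The involution $\iota$ is induced by $x_5 \mapsto -x_5$.

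With $n=5$, $c=2$, $n-c = 3 > 2$, and $\nu = \sum W_i - \sum d_j = 1$, Theorem \ref{t:inf_torelli_map_for_wci} identifies $\Hrm^1(X, \Theta_X) \cong R_{1, 0}$ and $\Hrm^{3-p}(X, \Omega_X^p) \cong R_{p, -1}^\vee$ for $p = 1, 2$. Because $X$ is a smooth Fano threefold, $\Hrm^{3, 0}(X) = \Hrm^{0, 3}(X) = 0$, so the only component of the infinitesimal period map that can be nonzero is $\Hrm^1(X, \Theta_X) \to \Hom_\CC(\Hrm^1(\Omega_X^2), \Hrm^2(\Omega_X^1))$, which under the theorem becomes the multiplication map $R_{1,0} \to \Hom_\CC(R_{1,-1}, R_{2,-1})$, $\alpha \mapsto (\beta \mapsto \alpha\beta)$. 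Extending $\iota$ to $R$ by $x_5 \mapsto -x_5$ and $y_j \mapsto y_j$ fixes $F$ and the Jacobian ideal, inducing a bigraded decomposition $R = R^+ \oplus R^-$; the identification $\Hrm^1(\Theta_X) \cong R_{1, 0}$ is $\iota$-equivariant and multiplication respects parity. The theorem thus reduces to the algebraic statement: for every nonzero $\alpha \in R_{1,0}^+$, there exists $\beta \in R_{1,-1}$ with $\alpha\beta \neq 0$ in $R_{2,-1}$.

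To prove this, I would appeal to a Macaulay-style duality for the Jacobi ring of a quasi-smooth weighted complete intersection: $R$ is Artinian Gorenstein (bigraded) with a one-dimensional socle in a distinguished top bidegree, inducing perfect pairings between complementary bigraded components. Translating the vanishing $\alpha \cdot R_{1,-1} = 0$ via this pairing constrains $\alpha$ to lie in the annihilator of an explicit subspace. One then analyzes $R_{1, 0}^+$ concretely, decomposing it modulo the Jacobian ideal into the class of $y_2 x_5^2$ together with classes $[y_1 \cdot h]$, $h \in \CC[x_0,\dots,x_4]_2$, and $[y_2 \cdot h']$, $h' \in \CC[x_0,\dots,x_4]_4$, and for each type exhibits an explicit witness $\beta \in R_{1,-1}$ with $\alpha\beta \neq 0$. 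The essential ring-theoretic input is the form of the Jacobian generators, in particular $\partial_{x_i} F = y_1 \partial_{x_i} f_1 - y_2 \partial_{x_i} g$ for $i \leq 4$ (which couples the $y_1$- and $y_2$-graded parts) and $\partial_{x_5} F = 2 y_2 x_5$ (which kills all $y_2 x_5 \cdot (\textrm{anything in the } x_i)$ classes).

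The main obstacle is executing this algebraic step, and the $\iota$-invariance assumption is essential here. A direct computation shows that $\partial_{x_5} F = 2 y_2 x_5$ collapses all candidates $y_2 x_5 x_i x_j \in R_{1,0}^-$ to zero, leaving the single class $[y_1 x_5]$ spanning $R_{1, 0}^-$; this class lies in the kernel of the full infinitesimal Torelli map and is precisely what makes the unrestricted Torelli fail for hyperelliptic Fano threefolds of this type. Restricting to $R_{1,0}^+$ eliminates this obstruction, and the multiplication-nonvanishing argument then succeeds, but requires careful control of the bigraded relations in $R$ and of the Macaulay pairing — this is the technical heart of the proof.
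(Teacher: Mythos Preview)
Your setup is correct and matches the paper: realize $X$ as $V_+(f_1,f_2)\subset\PP(1,1,1,1,1,2)$, apply Theorem~\ref{t:inf_torelli_map_for_wci} with $\nu=1$, and reduce the $\iota$-invariant infinitesimal Torelli map to the multiplication map $R_{1,0}^+\to\Hom(R_{1,-1},R_{2,-1})$. Your identification of $R_{1,0}^-$ with the single class $[y_1x_5]$ is also right.

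The gap is in the ``technical heart'' you flag but do not carry out. Your plan---Gorenstein duality for the bigraded Jacobi ring plus case-by-case explicit witnesses---is vague, and your decomposition of $R_{1,0}^+$ into the three types $y_2x_5^2$, $y_1h$, $y_2h'$ is redundant: the Jacobi relation $\partial_{x_i}F=0$ gives $y_1x_i=\tfrac12 y_2\,\partial_{x_i}g$ (after normalizing $f_1=\sum x_i^2$), so every $y_1h$ is already a $y_2h'$, and $y_2x_5^2\equiv y_2g$ via $\partial_{y_2}F$. The paper exploits exactly this collapse. Writing $B=\CC[x_0,\dots,x_4]/(f_1,g)$, the relations force
\[
R_{1,0}^+\cong B_4,\qquad R_{1,-1}\cong B_3,\qquad R_{2,-1}\cong B_7,
\]
so the problem becomes injectivity of the multiplication map $B_4\to\Hom(B_3,B_7)$ in the ordinary graded ring $B$. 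This is then dispatched by the classical Macaulay theorem: complete $f_1,g$ to a regular sequence $f_1,g,h_1,h_2,h_3$ in $\CC[x_0,\dots,x_4]$ with $\deg h_i$ large enough that the quotient agrees with $B$ in degrees $\le 7$, and apply Macaulay duality to the resulting Artinian Gorenstein ring. Your proposed route through a bigraded Gorenstein structure on $R$ itself would need justification (note $R$ is not Artinian as an ungraded ring, since the vanishing locus of the Jacobian ideal contains $\{x=0\}\times\AA^2_y$), whereas the reduction to $B$ sidesteps this entirely.
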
   

As explained in \cite[Section~3.5]{JLo18}, among the Fano threefolds of Picard number 1 and index 1, infinitesimal Torelli is satisfied if the degree is $2,6$ or $8$, and it is known to fail for degrees $10$ and $14$. Our work deals with one of the remaining cases, namely that of degree $4$. 

Note that the failure of infinitesimal Torelli for Fano threefolds of Picard number 1, index 1 and degree 4 is analogous to the failure of infinitesimal Torelli for curves of genus $ g\geq 2$. Such a curve satisfies infinitesimal Torelli if and only if it is hyperelliptic \cite{Catanese1984}, but the period map restricted to the hyperelliptic locus is an embedding \cite{Landesman2019}.

It is natural to study the action of the automorphism group of a variety on its cohomology group; see for example \cite{Cai2013, Javanpeykar2017, Kuznetsov2018}.
As an application of 
the explicit description of the cohomology groups of a Fano threefold with Picard rank $1$, index $1$, and degree $4$ given by Theorem \ref{t:inf_torelli_map_for_wci},  we get the following result about the action of the automorphism group.

\begin{Th}\label{t:atuomorphism action on cohomology of Fanos}
	Let $ X$ be a (smooth) Fano threefold of Picard rank $1$, index $1$ and degree $4$ over $\CC$. Then the following statements hold. 
	\begin{enumerate}
		\item The automorphism group $ \Aut(X) $ acts faithfully on $ \Homology^1(X,\Theta_X) $. 
		\item If $X$ is a smooth quartic, then $ \Aut(X) $ acts faithfully on $ \Hrm^3(X,\CC)$.
		\item If $X$ is hyperelliptic, then the kernel $ \ker\left(\Aut(X) \to \Aut(\Hrm^3(X,\CC))\right)$ is isomorphic to $ \ZZ/2\ZZ $ and generated by the involution $\iota$.
	\end{enumerate}
\end{Th}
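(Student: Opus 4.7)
The plan is to analyze the $\Aut(X)$-action on the Jacobi ring $R$ afforded by Theorem~\ref{t:inf_torelli_map_for_wci}: any automorphism of $X$ lifts to a linear change of variables in the ambient weighted projective space preserving the defining equations, inducing a compatible action on $R$ for which the isomorphisms of Theorem~\ref{t:inf_torelli_map_for_wci} are $\Aut(X)$-equivariant.

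For (1), the plan is to show that $\Aut(X)$ acts faithfully on $R_{1,0} \cong \Hrm^1(X,\Theta_X)$. In the smooth quartic case $X = V(f) \subset \PP^4$, the classical result that automorphisms of a smooth hypersurface of degree $\geq 3$ extend to $\PGL_{n+1}$ identifies $\Aut(X)$ with the $\PGL_5$-stabilizer of $[f]$, and $R_{1,0}$ is the degree-$4$ piece of the usual Jacobi ring of $f$; faithfulness then reduces to the fact that $\PGL_5$ acts faithfully on the quartics modulo the Jacobian ideal (since that ideal is generated in degree $3$). In the hyperelliptic case, where $X \subset \PP(1^5,2)$ is cut out by $q_2$ and $z^2 - g_4$, any automorphism fits into $1 \to \langle \iota \rangle \to \Aut(X) \to \Aut(Q,B) \to 1$, and I plan to use the summand $y_1 \cdot \CC[x]_2 \subset R_{1,0}$ to detect the induced $\Aut(Q,B)$-action on quadratics modulo $q_2$, while the summand $\CC \cdot y_1 z \subset R_{1,0}$ --- on which $\iota$ acts by $-1$ --- distinguishes $\mathrm{id}$ from $\iota$.

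For (2), combining (1) with the Carlson--Griffiths--Donagi infinitesimal Torelli theorem for smooth quartic threefolds~\cite{CaG80} does the job. The infinitesimal Torelli map is $\Aut(X)$-equivariant because it is induced by the cup product, so if $\phi \in \Aut(X)$ acts trivially on $\Hrm^3(X,\CC)$ it acts trivially on each $\Hom_\CC(\Hrm^p(X,\Omega^q),\Hrm^{p+1}(X,\Omega^{q-1}))$; injectivity of the infinitesimal Torelli map then forces $\phi$ to act trivially on $\Hrm^1(X,\Theta_X)$, and (1) gives $\phi = \mathrm{id}$.

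For (3), the inclusion $\langle \iota \rangle \subseteq \ker$ follows from a direct computation in $R$: the relations $y_2 z = 0$ and $z^2 = g_4$ force every class of each graded piece $R_{p,-\nu}$ computing $\Hrm^{p,3-p}(X)$ to be represented by monomials of even $z$-degree, so $\iota \colon z \mapsto -z$ acts on each $\Hrm^{p,3-p}(X)$ by the same scalar, placing $\iota$ in the kernel. For the reverse inclusion, let $\phi \in \ker$. Since $\iota$ is central in $\Aut(X)$ (being the deck transformation of the anti-canonical double cover $X \to Q$), $\phi$ preserves the $\iota$-eigenspace decomposition $\Hrm^1(X,\Theta_X) = V^+ \oplus V^-$; equivariance of the infinitesimal Torelli map together with its injectivity on $V^+$ (Theorem~\ref{t:inf torelli for hyperelliptic Fanos}) gives $\phi|_{V^+} = \mathrm{id}$. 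Since $\iota$ itself acts trivially on $V^+$, the action of $\Aut(X)$ on $V^+$ factors through $G := \Aut(X)/\langle \iota \rangle \hookrightarrow \Aut(Q,B)$, and adapting the argument of (1) to the $\iota$-invariant subring of $R$ (which records deformations of the pair $(Q,B)$) shows that $G$ acts faithfully on $V^+$, giving $\phi \in \langle \iota \rangle$. The main obstacle throughout is the faithfulness argument in (1) and its refinement in (3), which require careful use of the Jacobi-ring relations to reduce the question to the faithful linear action of $\PGL_5$ (resp.\ $\Aut(Q,B)$) on a suitable polynomial representation.
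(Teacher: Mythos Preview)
Your approach for (1) in the hyperelliptic case is essentially the paper's: both use the explicit injections $y_2 B_2 \oplus \CC\, y_2 z \hookrightarrow R_{1,0}$ to recover the linear part of $\varphi$ and the scalar on $z$. (The paper in fact only writes out (1) in the hyperelliptic case, so your separate treatment of the quartic case via the classical Jacobi ring is a welcome addition.) Your argument for (2) via infinitesimal Torelli together with (1) is a valid self-contained alternative; the paper simply cites the literature here.

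For (3) the paper takes a much shorter route than you propose. Rather than passing through Theorem~\ref{t:inf torelli for hyperelliptic Fanos} and then arguing that $G=\Aut(X)/\langle\iota\rangle$ acts faithfully on $V^+\cong y_4 B_4$, the paper observes that there is an injection $T_2 := y_2 B_1 \hookrightarrow R_{1,-1}$, where $R_{1,-1}$ is (dual to) a Hodge summand of $\Hrm^3(X,\CC)$. Since $B_1$ is just the space of linear forms with no relations, triviality of the $\varphi$-action on $T_2$ immediately forces the linear part $(\lambda_i)$ to be scalar; preserving $z^2-f$ then gives $b=\pm 1$, so $\varphi\in\{\id,\iota\}$. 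This avoids your detour through $V^+ \subset R_{1,0}$, Theorem~\ref{t:inf torelli for hyperelliptic Fanos}, and Macaulay's theorem, and sidesteps the extra work of checking faithfulness of $G$ on $B_4$ (where, unlike $B_1$, there are relations coming from $f$ and $g$). Your route should still go through, since the injection $y_2 B_2 \hookrightarrow y_4 B_4$ used in your (1) already shows $G$ acts faithfully on $V^+$, but the paper's use of $B_1$ inside $R_{1,-1}$ is the cleaner trick.
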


Note that part (2) was already known; see for example \cite[Proposition~2.12]{JaLoCI2017}.

\subsection*{Ingredients of proof}

For smooth complete intersections $X =V(f_1,\dots,f_c)$ in usual projective space,
similar results to Theorem \ref{t:inf_torelli_map_for_wci} have been achieved by relating the IVHS of $ X $ to the IVHS of the hypersurface $V(F) \sbe \PP(E) $, with $ E= \bigoplus \Oc_{\PP^n}(d_i) $; see \cite{Ter90}. 
To avoid problems of this geometric approach arising from the singular nature of the surrounding weighted projective space in our case, we will use another purely algebraic approach inspired by the calculations of Flenner; see \cite[Section~8]{Fle81}.
We will construct resolutions of the sheaves $ \tilde{\Omega}_X^p $ that will give us spectral sequences converging towards the cohomology groups of interest. 
The difficult part will be to make sure that the identification of the cohomology parts with the homogeneous components of the Jacobi ring is done in such a way that the contraction map can be identified with the ring-multiplication. To do this, we will extend the contraction pairing to a pairing of the resolutions and then to a pairing of the spectral sequences.   

\subsection*{Arithmetic motivation}
It is well-known that a variety admitting a quasi-finite period map is hyperbolic \cite[Section~8-9]{Griffiths1969}, and therefore (by Lang-Vojta's conjecture) should have only finitely many integral points, i.e., be "arithmetically hyperbolic"; see \cite[§~0.3]{Abramovich1997} or \cite{JBook, Lang1986}. For evidence on Lang-Vojta's arithmetic conjectures, see \cite{Autissier1, Autissier2, Corvaja2006, Faltings1994, Levin2009, Javanpeykar2021,UllmoShimura}.

We were first led to investigate the infinitesimal Torelli problem for these Fano threefolds when studying the arithmetic hyperbolicity of the moduli stack $ \mathcal{F} $ of Fano threefolds of Picard rank 1, index 1, degree 4; see \cite[Section~2]{JLo18} for a definition of this stack. The property of a stack being arithmetically hyperbolic, i.e., having "only finitely many integral points" is formalized in \cite{JLChevalleyWeil2020}.

In \cite{Licht2022}, we prove the arithmetic hyperbolicity of this stack by first proving that the period map 
$$
p\colon \mathcal{F}^{an}_\CC  \to \Ac_{30}^{an} 
$$ 
is quasi-finite and then using Faltings's theorem \cite{Faltings1983} which says that the stack of principally polarized abelian varieties $ \Ac_{30}  $ is arithmetically hyperbolic.
For the cases of Fano threefolds of Picard rank 1, index 1 and degree $2,6$ or $8$, the quasi-finiteness of the period map is deduced from it being unramified, i.e. its differential, the infinitesimal Torelli map, being injective; see \cite{JLo18}. However, by our result in the degree $4$ case, the infinitesimal Torelli map is not injective. We overcome this difficulty in \cite{Licht2022} by showing that the moduli stack $\mathcal{F}$ has a natural two-step "stratification" and that on each stratum, the "restricted" period map is unramified. 
This then suffices to deduce the desired quasi-finiteness of the above period map.

\subsection*{Acknowledgements}

I would like to thank Ariyan Javanpeykar. He introduced me to Lang-Vojta's conjecture. The work presented here was done under his supervision during my phd project. I am very grateful for many inspiring discussions and his help in writing this article. I gratefully acknowledge the support of
SFB/Transregio 45.

%%%%%%%%%%%%%%%%%%%%%%%%%
% n graded modules
%%%%%%%%%%%%%%%%%%%%%%%%%

\section{Multigraded differential modules}
\label{s:graded_modules}
In this section, we introduce multi-graded differential modules, which is a notion used for example in \cite{Boocher2010}. In particular, this notion describes single and double complexes and pages of spectral sequences.  

Let $R$ be a (commutative) ring or, more generally,  the structure sheaf $ \Oc_T $ of a scheme $T$. A differential $d$ on an $n$-graded $R$-module $ E = \bigoplus_{p \in \ZZ^n} E^{p} $ for us is always considered to be an $R$-linear self map that is homogeneous of a certain degree with $d \circ d = 0$. Whenever we consider a module together with multiple differentials defined on it, we require the differentials to commute pairwise. 

Let $(E_1,d_1) $ and $ (E_2, d_2 )$ be differential $n$-graded $R$-modules with homogeneous differentials of the same degree $ a\in \ZZ^n $. Then the tensor product $ E_1 \otimes E_2 $ comes with an induced $(2n)$-grading 
$$
E_1 \otimes  E_2 = \bigoplus_{(p,q)\in \ZZ^n \times \ZZ^n} E^p_1 \otimes E^{q}_2
$$  
and the two homogeneous differentials $ d_1 \otimes \id $ and $ \id \otimes d_2 $, giving us a bidifferential $2n$-graded $R$-module.  

\begin{Def}
	Let $n\in \ZZ_{>0} $ be a positive integer and let $(E,d_1,d_2)$ be a bidifferential $2n$-graded $R$-module. 
	Write the degree of $ d_i $ as $ (a_i,b_i)$ where $ a_i,b_i \in \ZZ^n $. Suppose $a_1+b_1 = a_2+b_2$, then 
	we define the \emph{associated total differential $n$-graded $R$-module of $ (E,d_1,d_2) $}  to be the $n$-graded module
	$$
	\Tot(E) = \bigoplus_{p \in \ZZ^n} \Tot^p(E) 
	$$
	where 
	$$
	\Tot^p(E) = \bigoplus_{\substack{s,t\in \ZZ^n \\ s+t = p}} E^{s,t}	
	$$
	with homogeneous differential $ d \in \End(\Tot(E) )$ of degree $a_1+b_1 = a_2+b_2 $
	defined by 
	$
	d\vert_{E^{s,t}} = d_1 + (-1)^{s_1} d_2.
	$
\end{Def}

\begin{Ex}
	Let $(K^{\bullet,\bullet},d_1,d_2)$  be a double complex. Then $ K = \bigoplus_{p,q\in \ZZ} K ^{p,q} $
	is a bigraded module and $d_1$, $d_2$ define differentials of degree $(1,0)$, $(0,1)$ on $K$, thus giving $K$ the structure of a bidifferential bigraded module. In fact, giving the data of a double complex is equivalent to defining a bigraded module with 
	differentials of degree $(1,0)$  and $(0,1)$.
	Similarly, a complex $(L^\bullet,d) $ can be identified with the differential graded module $ (L = \bigoplus_{p\in \ZZ} L^p, d)$.
	Under these identifications, the total single complex associated to $ K^{\bullet,\bullet}$ and the total differential graded module associated to $K^{\bullet,\bullet}$ are the same. 
\end{Ex}

\begin{Ex}
	For us, the total differential bigraded module associated to a tensor product of bigraded differential modules with differentials of the same degree $a\in \ZZ^2$ is of particular interest.
	So let $(E_1,d_1) $ and $ (E_2, d_2 )$ be differential bigraded modules. 
	Then the differentials $ d_1 \otimes \id $ and $ \id \otimes d_2 $ on the quadgraded module $ E_1 \otimes E_2 $ have degrees $ (a_1,a_2,0,0)$ and $(0,0,a_1,a_2)$.
	For $p,q \in \ZZ $, we have
	$$
	\Tot^{p,q}(E_1 \otimes E_2) = \bigoplus_{\substack{s+t = p \\ u+v = q}} E^{s,u}_1 \otimes  E^{t,v}_2.
	$$
	%For $ \alpha \in E^{s,u} $ and $ \beta \in E^{t,v}$  the differential is given as 
	On $  E^{s,u} \otimes  E^{t,v}$  the differential is given as 
	$$
	d_{Tot}\vert_{E^{s,u} \otimes  E^{t,v}} = d_1\vert_{E^{s,u}} \otimes \id_{E^{t,v}} + (-1)^s \id\vert_{E^{s,u}} \otimes  d_2\vert_{E^{t,v}}.
	$$
\end{Ex}

%%%%%%%%%%%%%%%%%%%%%%%%%%%%%%%%%%%
%Pairings of filtered complexes
%%%%%%%%%%%%%%%%%%%%%%%%%%%%%%%%%%%

\section{Pairings of filtered complexes}\label{s:pairings of filtered complexes}

In this section, we explain how a pairing of filtered complexes induces a pairing of the associated homology complexes that respects the induced filtration.		
Let  $R$ be a  ring or, more generally,  the structure sheaf $ R= \Oc_T $ of a scheme $T$. All modules are considered to be $R$-modules and all single (resp. double) complexes are considered to be single (resp. double) complexes of $R$-modules.

Let $ (K,d) $, $ ( K^\bullet_1, d_1) $ and $ ( K^\bullet_2,  d_2) $ be complexes. The total complex of the tensor product of $K^\bullet_1$ and $K^\bullet_2$, as introduced in Section \ref{s:graded_modules}, is given by
$$
\Tot^n( K^\bullet_1 \otimes  K^\bullet_2) = \bigoplus_{p+q=n}  K^{p}_1 \otimes K^{q}_2
$$
with the differential given by
$$
d_{Tot}\vert_{K_1^p\otimes K_2^q} = d_1 \otimes \id\vert_{K_2^q} + (-1)^p \id\vert_{K_1^p} \otimes  d_2.
$$

\begin{Def} 
	A \emph{pairing of complexes from $ ( K^\bullet_1, d_1) $ and $ ( K^\bullet_2,  d_2) $ to $ (K,d) $} is a morphism of complexes
	$$
	\phi:(\Tot^\bullet( K^\bullet_1 \otimes  K^\bullet_2) ),d_{Tot})\to (K^\bullet,d).
	$$
	For $p,q \in \ZZ $, we let  $ \phi^{p,q}$ denote the map 
	$
	\phi^{p,q}\colon K^{p}_1 \otimes K^{q}_2 \to K^{p+q}
	$ 
	induced by $\phi$.
	
\end{Def}

From now on let $R$ be a ring. Directly from the definitions follows:

\begin{Lemma}\label{l:pairing of complexes induces pairing of homology}
	Let $ (K^\bullet,d)$, $ ( K^\bullet_1, d_1) $ and $ ( K^\bullet_2,  d_2) $ be complexes and let 
	$$
	\phi\colon 	\Tot^n( K^\bullet_1 \otimes  K^\bullet_2)  \to K^\bullet
	$$
	be a pairing of complexes. Then $\phi$ induces a pairing of the associated homology complexes
	$$
	\overline{\phi}\colon \Tot^\bullet( \Homology^\bullet(K^\bullet_1,d_1) \otimes  \Homology^\bullet(K_2, d_2) ) \to \Homology^\bullet(K^\bullet,d).
	$$
\end{Lemma}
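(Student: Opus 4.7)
The plan is to unfold the definition of a pairing of complexes and chase cocycles through the component maps $\phi^{p,q}\colon K_1^p \otimes K_2^q \to K^{p+q}$, showing that they respect cocycles and coboundaries. The key identity is the Leibniz relation built into $d_{Tot}$: since $\phi$ is a morphism of complexes, one has
\[
d\bigl(\phi^{p,q}(a\otimes b)\bigr) = \phi^{p+1,q}(d_1 a \otimes b) + (-1)^p \phi^{p,q+1}(a \otimes d_2 b)
\]
for all $a \in K_1^p$, $b \in K_2^q$. Everything will follow from this single formula.

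First I would check that cocycles pair to cocycles: if $d_1 a = 0$ and $d_2 b = 0$, the Leibniz identity yields $d\bigl(\phi^{p,q}(a\otimes b)\bigr) = 0$, so $\phi^{p,q}(a\otimes b) \in Z^{p+q}(K^\bullet,d)$. Next I would check that modifying $a$ by a coboundary changes $\phi^{p,q}(a \otimes b)$ by a coboundary, and similarly for $b$: if $a = d_1 a'$ with $a' \in K_1^{p-1}$ and $d_2 b = 0$, the same identity gives
\[
\phi^{p,q}(d_1 a' \otimes b) = d\bigl(\phi^{p-1,q}(a'\otimes b)\bigr) - (-1)^{p-1}\phi^{p-1,q+1}(a' \otimes d_2 b) = d\bigl(\phi^{p-1,q}(a'\otimes b)\bigr),
\]
which is a coboundary in $K^{p+q}$. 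An analogous computation handles the case where $b$ is a coboundary and $a$ is a cocycle.

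Combining these steps, I obtain a well-defined bilinear map $\Homology^p(K_1^\bullet) \times \Homology^q(K_2^\bullet) \to \Homology^{p+q}(K^\bullet)$ for every $p,q$. By the universal property of the tensor product these assemble into a graded $R$-linear map
\[
\overline{\phi}\colon \Tot^\bullet\bigl(\Homology^\bullet(K_1^\bullet)\otimes \Homology^\bullet(K_2^\bullet)\bigr) \to \Homology^\bullet(K^\bullet,d).
\]
Since the differentials on $\Homology^\bullet(K_i^\bullet)$ and $\Homology^\bullet(K^\bullet)$ are zero, the induced differential on the total complex on the left-hand side is also zero, so $\overline{\phi}$ is automatically a morphism of complexes; there is nothing further to verify.

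Since there is no genuine obstacle, the only thing that requires a little care is the sign bookkeeping in the second step and the justification that the association $[a]\otimes[b] \mapsto [\phi^{p,q}(a\otimes b)]$ factors through the tensor product of cohomologies — both of which are direct consequences of the Leibniz identity above.
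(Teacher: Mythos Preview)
Your argument is correct and is exactly the routine verification the paper has in mind: the paper does not give a proof beyond the remark ``Directly from the definitions follows'', and your unpacking of the Leibniz identity coming from the compatibility $d\circ\phi=\phi\circ d_{Tot}$ is precisely that verification.
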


%\begin{proof}
%	Let $\alpha \in \ker(K^p_1 \to K^{p+1}_1)$, $\beta \in \ker(K^q_2 \to K^{q+1}_2) $. Then 
%	$$
%		d(\phi(\alpha\otimes \beta)) = \phi(d_{Tot}(\alpha \otimes \beta)) =
%		\phi( d_1 (\alpha) \otimes \beta + (-1)^p \alpha \otimes 
%		 d_2 (\beta) )= 0.
%	$$
%	Hence $ \phi(\alpha\otimes\beta) \in \ker(K^{p+q}\to K^{p+q+1})$.
%	If furthermore $ \alpha = d_1 (\gamma) $ for some $\gamma \in  K^{p-1}_1 $ then we see that
%	$$
%		d(\phi(\gamma \otimes \beta )) = \phi (d_{Tot} (\gamma \otimes \beta )) = \phi(\alpha \otimes \beta )
%	$$ 
%	lies in the image $ \im(K^{p+q-1}\to K^{p+q}) $.
%	Similarly we see that this also holds if $\beta \in \im(d_2)$.
%	Hence we have shown that $\phi^{p,q}$ induces a map 
%	$$
%		\Hrm^p( K_1^\bullet,d_1 ) \otimes \Hrm^q{( K_2^\bullet, d_2)} \to \Hrm^{p+q}(K^\bullet,d).
%	$$
%	Now these maps give a pairing of the homology complexes since the differentials of homology complexes are always zero.
%\end{proof}

A \emph{filtered complex} is a triple $(K^\bullet,d,F)$, where $K^\bullet$ is a complex with differential $ d$,  and $ F $ is a decreasing filtration on $K^\bullet$ compatible with the differential, 
i.e., for each $n\in \ZZ$, we have a decreasing filtration 
$$
K^n \spe \dots \spe F^{p} K^n \spe F^{p+1} K^n \spe \dots  
$$
such that $ d(F^{p} K^n) \sbe F^{p} K^{n+1} $ for all $n,p \in \ZZ$.

Given a filtered complex $(K^\bullet, d, F)$, there is an \emph{induced filtration} on the homology complex $ \Hrm^\bullet(K^\bullet,d)$ given by 
$$
F^p \Hrm^n (K^\bullet,d)) := \im(\Hrm^n(F^p K^\bullet) \to \Hrm^n(K^\bullet)) = \frac{\ker(d)\cap F^p K^n + \im(d) \cap K^n}{\im(d) \cap K^n}.
$$ 
For the associated graded, we have
\begin{equation}\label{eq:grpHnK}
	\mathrm{gr}^p \Hrm^n(K^\bullet) := \frac{F^p \Hrm^n (K^\bullet,d))}{F^{p+1} \Hrm^n (K^\bullet,d)} = \frac{\ker(d)\cap F^p K^n}{\ker(d)\cap F^{p+1} K^n + \im(d) \cap F^p K^n}; 
\end{equation}
see \cite[Tag~0BDT]{stkpr}.

\begin{Def}
	Let $(K^\bullet,d,F)$,$( K^\bullet_1, d_1, F_1)$ and $ ( K^\bullet_2, d_2, F_2)$ be filtered complexes. A pairing of complexes $$
	\phi\colon \Tot^\bullet( K^\bullet_1 \otimes  K^\bullet_2) \to K^\bullet
	$$
	is a \emph{pairing of filtered complexes} if it is \emph{compatible with the filtrations}, that is
	$$
	\phi^{p,q}(\alpha \otimes \beta) \in F^{i+j} K^{p+q}
	$$
	for all $\alpha \in  F^i_1  K^p_1 $ and $ \beta \in  F^j_2 K^q_2 $.
\end{Def}

From the definition, it is evident that for each $ p,q \in \ZZ $, such a pairing of filtered complexes induces a pairing of complexes 
$$
\phi\colon \Tot^\bullet( F^p_1 K^\bullet_1 \otimes F^{q}_2  K^\bullet_2) \to F^{p+q} K^\bullet.
$$
Hence the induced pairing of the homology complexes 
$$
\overline{\phi}\colon \Tot^\bullet (\Hrm^\bullet ( K^\bullet_1, d_1 ) \otimes \Hrm^\bullet {( K^\bullet_2,d_2)}) \to \Hrm^\bullet (K^\bullet,d)
$$
from Lemma \ref{l:pairing of complexes induces pairing of homology} is compatible with the induced filtrations on the homology complexes. Therefore, for each $p,q,i,j \in \ZZ $, we get induced maps $ \overline{\phi}^{p,q,i,j} $ and $ \gr^{p,q,i,j}(\phi)$
making the diagram 
\begin{equation}\label{e:diagram}
	\begin{tikzcd}[column sep=9ex]
		\Hrm^{p} ( K^\bullet_1) \otimes \Hrm^{q} (K^\bullet_2)   \ar[d,"\overline{\phi}^{p,q}"] & 
		F^i\Hrm^{p} ( K^\bullet_1) \otimes F^j\Hrm^{q} (K^\bullet_2)\ar[d,"\overline{\phi}^{p,q,i,j}"] \ar[l,"\alpha^{p,i}\otimes \alpha^{q,j}"] \ar[r,"\beta^{p,i}\otimes \beta^{q,j}"]& 
		\gr^i\Hrm^{p} ( K^\bullet_1) \otimes \gr^j\Hrm^{q} (K^\bullet_2) \ar[d,"\gr^{p,q,i,j}(\phi)"]\\
		\Hrm^{p+q} ( K^\bullet)  & 
		F^{i+j}\Hrm^{p+q} ( K^\bullet) \ar[l,"\alpha^{p+q,i+j}"] \ar[r,"\beta^{p+q,i+j}"]  & 
		\gr^{i+j}\Hrm^{p+q} ( K^\bullet) 
	\end{tikzcd}
\end{equation}
commute, where the maps $ \alpha^{a,b}$ denote the natural injections and the maps $\beta^{a,b} $ denote the natural surjections.

\label{s:pairings_of_filtered_complexes}

%%%%%%%%%%%%%%%%%%%%%%%%%%%%%%%%%%%
% spectral pairing
%%%%%%%%%%%%%%%%%%%%%%%%%%%%%%%%%%%

\section{Spectral pairing} \label{s:spectral pairing}

In this section, we follow \cite[Tag~012K]{stkpr} and explain how to construct the spectral sequence associated to a filtered complex. Building on this, we show that a pairing of filtered complexes induces a pairing of the associated spectral sequences. 

Let $R$ be a ring. All complexes are complexes of $R$-modules. 
A \emph{spectral sequence} is given by the data 
$$
E = (E_r,d_r )_{r \in \ZZ_{\geq 0}}
$$
where $ E_r $ is an $R$-module and $ d_r \in \End(E_r) $ is a differential such that $$ E_{r+1} = \Homology (E_r,d_r) := \ker(d_r)/ \im(d_r). $$ We call $E_r$ the  \emph{$r$-th page of $E$}. A \emph{bigrading on the spectral sequence $E$} is given by a direct sum decomposition for each page $ E_r = \bigoplus_{p,q\in \ZZ} E_r^{p,q} $ such that the differential $d_r$ decomposes into a direct sum of maps
$$ d_r^{p,q}\colon E_r^{p,q} \to E_r^{p+r,q-r+1} $$ and we have 
$$
E^{p,q}_{r+1} = \ker(d_r^{p,q}) / \im d_r^{p-r,q+r-1}.
$$

 We can associate a bigraded spectral sequence to a filtered complex $(K^\bullet,d,F)$ in the following way. We define
$$
Z_r^{p,q} = \frac{F^p K^{p+q}\cap d\inv(F^{p+r}K^{p+q+1})+F^{p+1}K^{p+q}}{F^{p+1}K^{p+q}}
$$
and
$$
B_r^{p,q} = \frac{F^pK^{p+q}\cap d(F^{p-r+1}K^{p+q-1})+F^{p+1}K^{p+q}}{F^{p+1}K^{p+q}}
$$
and $E_r^{p,q} = Z_r^{p,q}/B_r^{p,q}$. Now set $ B_r = \bigoplus_{p,q} B_r^{p,q} $, $ Z_r = \bigoplus_{p,q} Z_r^{p,q} $ and $ E_r = \bigoplus_{p,q} E_r^{p,q} $. 
Define the map $ d_r\colon E_r \to E_r $ as the direct sum of the maps $$ d_r^{p,q}\colon E_r^{p,q} \to E_r^{p+r,q-r+1}\colon \quad z+ F^{p+1}K^{p+q} \mapsto d(z) + F^{p+r+1} K^{p+q+1} $$ where $z \in F^p K^{p+q}\cap d\inv(F^{p+r}K^{p+q+1})$. This defines the \emph{bigraded spectral sequence $ (E_r,d_r) $ associated to the filtered complex
	$(K^\bullet,d,F)$}.

\begin{Def}
	Let $ ( E_r,d_r), (\lpr E_r, \lpr d_r)$ and $ (\lprpr E_r, \lprpr d_r) $ be bigraded spectral sequences. Let $\phi = (\phi_r)_{r \in \ZZ_{\geq 0}} $ be a collection of morphisms of bigraded differential modules 
	$$
	\phi_r\colon \Tot^{\bullet,\bullet}(\lpr E_r^{\bullet,\bullet} \otimes \lprpr E_r^{\bullet,\bullet}) \to E_r^{\bullet,\bullet} 
	$$
	that are homogeneous of degree $0$. 
	We denote by  
	$$  \phi^{s,t,u,v}_r\colon \lpr E_r^{s,u} \otimes 
	\lprpr E_r^{t,v} \to E_r^{s+t,u+v}$$ the map induced by $\phi$.
	The collection $ \phi $ is called a \emph{pairing of bigraded spectral sequences} if 
	$ \phi_{r+1}^{s,t,u,v} $ is induced by $ \phi_r^{s,t,u,v} $ for all $r,s,t,u,v \in \ZZ$, $r \geq 0 $. 
	
\end{Def}

Form the definitions, we see: 

\begin{Lemma}\label{l:pairing_of_filtered_complexes_induces_pairing_of_spectral_sequences}
	Let $((K^\bullet, d, F), (E_r,d_r)) $, $((\, ^\prime K^\bullet, \,^\prime d , \,^\prime F),  ( \lpr E_r,\lpr d_r)) $ and $((\,^{\prime\prime} K^\bullet, \,^{\prime\prime} d ,  \,^{\prime\prime} F),  (\lprpr E_r, \lprpr d_r))$
	be pairs of filtered complexes with their associated bigraded spectral sequences. 
	Any pairing of filtered complexes 
	$$ \phi:\Tot^\bullet( ( \, ^\prime K^\bullet, \,^\prime d , \,^\prime F) \otimes (\,^{\prime\prime} K^\bullet, \,^{\prime\prime} d , \,^{\prime\prime} F) )\to (K^\bullet, d, F) $$ 
	induces a pairing of the associated spectral sequences $ \tilde{\phi} = (\tilde{\phi}_r)_{r\in\ZZ_{\geq 0}} $,
	$$ \tilde{\phi}_r \colon \Tot^{\bullet,\bullet}((\lpr E_r^{\bullet,\bullet}, \lpr d_r)  \otimes (\lprpr E_r^{\bullet,\bullet}, \lprpr d_r)) \to  ( E_r^{\bullet,\bullet},d_r). $$ 
\end{Lemma}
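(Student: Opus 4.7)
The plan is to define, for each $r \geq 0$, the component $\tilde\phi_r^{s,t,u,v}\colon \lpr E_r^{s,u}\otimes\lprpr E_r^{t,v}\to E_r^{s+t,u+v}$ by lifting classes to representatives in the filtered complexes and applying $\phi$ there. Using the description $\lpr E_r^{s,u}=\lpr Z_r^{s,u}/\lpr B_r^{s,u}$, any class admits a representative $\alpha\in\lpr F^s\lpr K^{s+u}$ with $\lpr d\,\alpha\in\lpr F^{s+r}\lpr K^{s+u+1}$, and similarly $\beta$ represents a class in $\lprpr E_r^{t,v}$. I would declare $\tilde\phi_r^{s,t,u,v}(\bar\alpha\otimes\bar\beta)$ to be the class of $\phi^{s+u,t+v}(\alpha\otimes\beta)$ in $E_r^{s+t,u+v}$.

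The main verification is well-definedness. Filtration compatibility of $\phi$ gives $\phi(\alpha\otimes\beta)\in F^{s+t}K^{s+t+u+v}$, and since $\phi$ is a chain map for $d_{Tot}$,
\[
d\,\phi(\alpha\otimes\beta)=\phi(\lpr d\,\alpha\otimes\beta)+(-1)^{s+u}\phi(\alpha\otimes\lprpr d\,\beta)\in F^{s+t+r}K^{s+t+u+v+1},
\]
because each summand pairs one factor of filtration degree at least $s+t+r$ with another of nonnegative filtration, again using that $\phi$ respects filtrations. Hence $\phi(\alpha\otimes\beta)$ represents an element of $Z_r^{s+t,u+v}$. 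A parallel calculation, where a $\lpr B_r$-representative of the form $\lpr d\,\gamma+\delta$ with $\gamma\in\lpr F^{s-r+1}\lpr K^{s+u-1}$ and $\delta\in\lpr F^{s+1}\lpr K^{s+u}$ is plugged into $\phi$, shows that the image lies in $B_r^{s+t,u+v}$, and symmetrically in the second variable. Thus $\tilde\phi_r$ descends to a well-defined bihomogeneous map of bidegree $0$, providing the required morphism out of the totalization.

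The final step is to check that $\tilde\phi_{r+1}$ is induced by $\tilde\phi_r$. Since $d_r$ on $E_r$ is induced by $d$ on representatives, the identity above descends to the page-level Leibniz rule
\[
d_r\tilde\phi_r(\bar\alpha\otimes\bar\beta)=\tilde\phi_r(\lpr d_r\,\bar\alpha\otimes\bar\beta)+(-1)^{s+u}\tilde\phi_r(\bar\alpha\otimes\lprpr d_r\,\bar\beta),
\]
so $\tilde\phi_r$ sends cocycles for the totalized differential on the tensor product to cocycles, and boundaries to boundaries. The map induced on homology must then coincide with $\tilde\phi_{r+1}$, because both are defined by the same formula $\bar\alpha\otimes\bar\beta\mapsto\overline{\phi(\alpha\otimes\beta)}$ at the level of filtered representatives, using $\lpr E_{r+1}=\Homology(\lpr E_r,\lpr d_r)$ and its analogues.

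The main obstacle is bookkeeping rather than conceptual: one must systematically track the filtration indices, the two bigradings, and the Leibniz sign $(-1)^{s+u}$, and verify that representatives can be chosen consistently at each stage so that every quantity lands in the claimed subspace. Once this is handled, the lemma reduces to repeated application of filtration compatibility of $\phi$ together with the Leibniz identity for the total differential, both of which are built into the hypotheses.
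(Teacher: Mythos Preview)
Your proposal is correct and follows essentially the same route as the paper, which merely asserts that $\phi$ induces a map $\lpr Z_r^{s,u}\otimes\lprpr Z_r^{t,v}\to Z_r^{s+t,u+v}$ sending $\lpr B_r\otimes\lprpr Z_r$ and $\lpr Z_r\otimes\lprpr B_r$ into $B_r$, and that compatibility of $\tilde\phi_{r+1}$ with $\tilde\phi_r$ holds because both are induced by $\phi$; you have simply spelled out this computation. One small slip: in your justification that $d\,\phi(\alpha\otimes\beta)\in F^{s+t+r}$, neither individual factor has filtration degree $s+t+r$; rather the filtration degrees of the two factors sum to at least $s+t+r$, and filtration compatibility of $\phi$ then gives the claim.
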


\begin{proof}
	A computation shows that $\phi$ induces a map $ \lpr Z_r^{s,u} \otimes \lprpr Z_r^{t,v} \to Z_r^{s+t,u+v} $  which maps both $ \lpr B_r^{s,u} \otimes \lprpr Z_r^{t,v} $ and $ \lpr Z_r^{s,u} \otimes \lprpr B_r^{t,v} $ to $ B_r^{s+t,u+v} $.
	That $ \tilde{\phi}_{r+1}^{s,t,u,v} $ is induced by $ \tilde{\phi}_r^{s,t,u,v} $ follows from the fact that both maps are induced by $ \phi $. For details see \cite{LichtPdh2022}.
\end{proof}

%\begin{proof}
%	First we want to show that $\phi$ induces a map $ \lpr Z_r^{s,u} \otimes \lprpr Z_r^{t,v} \to Z_r^{s+t,u+v} $.
%	So let 
%	$$ \alpha \in \lpr F^s \lpr  K^{s+u} \cap \lpr d\inv (\lpr F^{r+s} \lpr K^{s+u+1}) $$ and 
%	$$ \beta \in  \lprpr F^t \lprpr K^{t+v} \cap \lprpr d\inv (\lprpr F^{r+t} \lprpr K^{t+v+1}). $$ Then certainly 
%	$$ \phi^{s+u,t+v}  (\alpha \otimes  \beta) \in F^{s+t} K^{s+t+u+v} $$
%	as $\phi$ respects the filtration. Furthermore
%	\begin{align*}
	%		&\,d(\phi^{s+u,t+v}  ( \alpha \otimes \beta)) \\
	%		=&\, \phi_{Tot} \circ d_{Tot} (\alpha \otimes \beta) \\
	%		=&\, \phi( \lpr d (\alpha) \otimes \beta ) + (-1)^{s+u} \phi(\alpha \otimes \lprpr d( \beta)). 
	%	\end{align*}
%	Now since $ \lpr d(\alpha) \in \lpr F^{r+s} \lpr K^{s+u+1} $ and $ \lprpr d (\beta) \in \lprpr F^{r+t} \lprpr K^{t+v+1} )$
%	this shows that 
%	$$
%	\phi^{s+u,t+v}  ( \alpha \otimes \beta) \in d\inv(F^{r+s+t} K^{1+s+t+u+v}).
%	$$
%	Similiarly it is seen that that under this map $\lpr B_r^{s,u} \otimes \lprpr Z_r^{t,v} $ and $ \lpr Z_r^{s,u} \otimes \lprpr B_r^{t,v}$ are both maped to $ B_r^{s+t,u+v} $. Hence in fact $\phi$ induces maps 
%	$$ \tilde{\phi}_r^{s,t,u,v}:\lpr  E_r^{s,u} \otimes \lprpr E_r^{t,v} \to E_r^{s+t,u+v}. $$
%	That $ \tilde{\phi}_{r+1}^{s,t,u,v} $ is induced by $ \tilde{\phi}_r^{s,t,u,v} $ follows from the fact, that both maps are induced by $ \phi $. 
%\end{proof}

For a filtered complex $(K^\bullet,d,F)$, we define 
$$
Z_\infty^{p,q} = \bigcap_r Z_r^{p,q} = \bigcap_r  \frac{F^p K^{p+q}\cap d\inv(F^{p+r}K^{p+q+1})+F^{p+1}K^{p+q}}{F^{p+1}K^{p+q}}
$$
and
$$
B_\infty^{p,q} = \bigcup_r B_r^{p,q} = \bigcup_r \frac{F^pK^{p+q}\cap d(F^{p-r+1}K^{p+q-1})+F^{p+1}K^{p+q}}{F^{p+1}K^{p+q}}
$$
and $ E_\infty^{p,q} = Z_\infty^{p,q} / B_\infty^{p,q} $.
If we now suppose that the filtration is finite, i.e., for all $ n \in \ZZ $, there are $l, m\in \ZZ$ such that $ F^l K^n = K^n $ and $ F^m K^n = 0 $, then the chains 
$$
Z_0^{p,q} \spe \dots Z_r^{p,q} \spe Z_{r+1}^{p,q} \spe \dots 
$$
and
$$
B_0^{p,q} \sbe \dots B_r^{p,q} \sbe B_{r+1}^{p,q} \sbe \dots 
$$
become stationary and assume $Z_\infty^{p,q}$ and $ B_\infty^{p,q}$ after finitely many steps. We have
$$
Z_\infty^{p,q} = \frac{F^p K^{p+q}\cap \ker(d) +F^{p+1}K^{p+q}}{F^{p+1}K^{p+q}}
$$
and
$$
B_\infty^{p,q} =  \frac{F^p K^{p+q}\cap \im(d) + F^{p+1}K^{p+q}}{F^{p+1}K^{p+q}}.
$$

If we now put $n=p+q$ and compare with Equation (\ref{eq:grpHnK}), we get an identity 
\begin{equation}\label{e:E infinity is graded}
	\mathrm{gr}^p \Hrm^n(K^\bullet) = \frac{\ker(d)\cap F^p K^n + F^{p+1} K^n}{\im(d)\cap F^p K^n + F^{p+1} K^n} = E_\infty^{p,q}.
\end{equation}

\begin{Th}\label{t:spectral pairing}
	Let $((K^\bullet, d, F), (E_r,d_r)) $, $((\, ^\prime K^\bullet, \,^\prime d , \,^\prime F),  ( \lpr E_r,\lpr d_r)) $ and $((\,^{\prime\prime} K^\bullet, \,^{\prime\prime} d ,  \,^{\prime\prime} F), (\lprpr E_r, \lprpr d_r))$
	be pairs of filtered complexes with their associated bigraded spectral sequences such that all the filtrations are finite, and let $$
	\phi\colon \Tot^\bullet(\lpr K^\bullet \otimes \lprpr K^\bullet) \to K^\bullet
	$$
	be a pairing of filtered complexes. The induced pairing of the associated spectral sequences induces a pairing of bigraded modules 
	$$
	\tilde{\phi}_\infty\colon \Tot^\bullet(\lpr E_\infty^{\bullet,\bullet} \otimes \lprpr E_\infty^{\bullet,\bullet}) \to E_\infty^{\bullet,\bullet}
	$$
	such that for all $i,j,p,q \in \ZZ $, the diagram
	$$
	\begin{tikzcd}[column sep=13ex]
		\lpr E_\infty^{i,p} \otimes \lprpr E_\infty^{j,q} \ar[r,"="] \ar[d,"{\tilde{\phi}_\infty^{i,j,p,q}}"]  &  \gr^i\Homology^{p+i}(\lpr K^\bullet) \otimes \gr^j \Homology^{q+j} (\lprpr K^\bullet) \ar[d,"{\gr^{p+i,q+j,i,j} (\phi)}"] 
		\\
		E_\infty^{i+j, p+q}   \ar[r,"="] &
		\gr^{i+j}\Homology^{p+q+i+j}(K^\bullet)
	\end{tikzcd}
	$$
	commutes.
\end{Th}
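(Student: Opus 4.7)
The plan is to construct $\tilde\phi_\infty$ from the pairings $\tilde\phi_r$ produced by Lemma \ref{l:pairing_of_filtered_complexes_induces_pairing_of_spectral_sequences}, using finiteness of the filtrations to pass to the limit, and then to verify the commutativity of the diagram by unwinding both identifications on representatives in $F^p K^{p+q}$.

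First, I would observe that for each fixed $(p,q)$, since all three filtrations are finite, the descending chain $\{\lpr Z_r^{p,q}\}_r$ and the ascending chain $\{\lpr B_r^{p,q}\}_r$ (and analogously for the other two complexes) stabilize at some finite stage. Hence there exists an $r_0$ (depending on $(p,q,i,j)$) such that for all $r \geq r_0$ we have $\lpr Z_r^{i,p} = \lpr Z_\infty^{i,p}$, $\lpr B_r^{i,p} = \lpr B_\infty^{i,p}$, and similarly for $\lprpr E$ and $E$. By the proof of Lemma \ref{l:pairing_of_filtered_complexes_induces_pairing_of_spectral_sequences}, the pairing $\phi$ sends $\lpr Z_r^{s,u}\otimes \lprpr Z_r^{t,v}$ into $Z_r^{s+t,u+v}$ and sends $\lpr B_r\otimes \lprpr Z_r + \lpr Z_r \otimes \lprpr B_r$ into $B_r^{s+t,u+v}$. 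Passing to the common stabilization stage, the same holds with $Z_\infty$ and $B_\infty$ in place of $Z_r$ and $B_r$, and we define
$$
\tilde\phi_\infty^{i,j,p,q}\colon \lpr E_\infty^{i,p} \otimes \lprpr E_\infty^{j,q} \to E_\infty^{i+j,p+q}
$$
as the induced map on quotients. Independence of the choice of $r_0$ is immediate.

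Next I would verify the commutativity of the diagram in the statement. Unwinding the identification in Equation (\ref{e:E infinity is graded}), a class in $E_\infty^{i,p}$ is represented by some $\alpha \in F^i {\lpr K}^{i+p}\cap \ker(\lpr d)$ (modulo $F^{i+1}{\lpr K}^{i+p}$ and $\im(\lpr d)\cap F^i{\lpr K}^{i+p}$), and similarly a class in $\lprpr E_\infty^{j,q}$ is represented by $\beta \in F^j {\lprpr K}^{j+q}\cap \ker(\lprpr d)$. On the spectral side, $\tilde\phi_\infty$ sends $\alpha \otimes \beta$ to the class of $\phi^{i+p,j+q}(\alpha\otimes\beta)$ in $E_\infty^{i+j,p+q}$, which by finiteness of the filtration lies in $F^{i+j} K^{i+j+p+q}\cap \ker(d)$ since $\phi$ is a morphism of complexes compatible with the filtrations. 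On the homology side, $\overline\phi^{i+p,j+q}$ sends the classes of $\alpha$ and $\beta$ to the class of $\phi^{i+p,j+q}(\alpha\otimes\beta)$ in $\Homology^{i+j+p+q}(K^\bullet)$, and by the compatibility of $\phi$ with the filtrations this class lies in $F^{i+j}\Homology^{i+j+p+q}(K^\bullet)$. The diagram (\ref{e:diagram}) from Section \ref{s:pairings of filtered complexes} then says exactly that the graded piece $\gr^{i+p,j+q,i,j}(\phi)$ sends the classes of $\alpha$ and $\beta$ to the class of the same element $\phi^{i+p,j+q}(\alpha\otimes\beta)$ in $\gr^{i+j}\Homology^{i+j+p+q}(K^\bullet)$. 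Thus both compositions in the diagram agree on representatives, and commutativity follows.

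The main obstacle I anticipate is purely bookkeeping: one must check that the passage from $Z_r,B_r$ to $Z_\infty,B_\infty$ is compatible with taking representatives, and that the various identifications (spectral sequence $E_\infty^{p,q}$ with $\gr^p\Homology^{p+q}$, and $\tilde\phi_\infty$ with $\gr(\overline\phi)$) are all computed by the same formula $\alpha\otimes\beta\mapsto \phi(\alpha\otimes\beta)$ on a representative inside $F^i{\lpr K}^{i+p}\cap\ker(\lpr d)$ and $F^j{\lprpr K}^{j+q}\cap\ker(\lprpr d)$. Once this is set up carefully, there is no nontrivial content beyond tracking definitions through quotients.
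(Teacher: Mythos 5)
Your proposal is correct and follows essentially the same route as the paper's proof: invoke Lemma \ref{l:pairing_of_filtered_complexes_induces_pairing_of_spectral_sequences}, use finiteness of the filtrations so that the $Z_r$, $B_r$ (hence the pages) stabilize and the maps $\tilde{\phi}_r^{i,j,p,q}$ converge to $\tilde{\phi}_\infty^{i,j,p,q}$, and then observe that this limit map and $\gr^{p+i,q+j,i,j}(\phi)$ coincide because both are induced by $\phi$ on representatives in $F^i\,{}^\prime K^{i+p}\cap\ker({}^\prime d)$ and $F^j\,{}^{\prime\prime}K^{j+q}\cap\ker({}^{\prime\prime}d)$ via Equation (\ref{e:E infinity is graded}) and Diagram (\ref{e:diagram}). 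Your write-up merely makes the paper's final sentence ("it coincides with $\gr^{p+i,q+j,i,j}(\phi)$ as both maps are induced by $\phi$") explicit, which is fine.
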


\begin{proof}
	By Lemma  \ref{l:pairing_of_filtered_complexes_induces_pairing_of_spectral_sequences}, the pairing of filtered complexes $\phi$ induces a pairing of spectral sequences $$ \tilde{\phi}_r \colon \Tot^{\bullet,\bullet}((\lpr E_r^{\bullet,\bullet}, \lpr d_r)  \otimes (\lprpr E_r^{\bullet,\bullet}, \lprpr d_r)) \to  ( E_r^{\bullet,\bullet},d_r). $$ 
	For each ${i,j,p,q}\in \ZZ $ the modules $ E_r^{i,p} $, $ 'E_r^{i,p} $ and $ ''E_r^{i,p} $ assume  $ E_\infty^{i,p} $, $ 'E_\infty^{i,p} $ and $ ''E_\infty^{i,p} $ after finitely many pages. Hence the maps $ \tilde{\phi}^{i,j,p,q}_r$ converge to a map $$
	\tilde{\phi}^{i,j,p,q}_\infty\colon  \lpr E_\infty^{i,p} \otimes \lprpr E_\infty^{j,q} \to E_\infty^{i+j, p+q}.
	$$  
	It coincides with $\gr^{p+i,q+j,i,j} (\phi)$ as both maps are induced by $\phi$.
\end{proof}

%%%%%%%%%%%%%%%%%%%%%%%%%%%%%%%%%%%
% The contraction Pairing on the affine cone
%%%%%%%%%%%%%%%%%%%%%%%%%%%%%%%%%%%

\section{The contraction pairing on the affine cone}\label{s:pairing of resolutions}

Let $k$ be a field of characteristic zero and let $X = V_+(f_1,\dots,f_c) \sbe \PP_k(W_0,\dots,W_n) $ be a quasi-smooth weighted complete intersection of degree $(d_1,\dots,d_c)$ with coordinate ring $$ A = S_W/(f_1,\dots,f_c) $$ and affine cone $ U = Y \setminus \{0 \}$, where $ Y = \Spec A $. Let $ \Ic \sbe \Oc_{\mathbb{A}^{n+1}\setminus\{0\}}$ be the ideal sheaf of $ U $ in $\mathbb{A}^{n+1} \setminus \{0\} $. 
Let $\Omega_U^1$ be the sheaf of $k$-differentials on $U$ and let $ \Theta_U^1 $ be its dual, namely the tangent sheaf. 
Let $ p $ be an integer satisfying $1 \leq p \leq n-c $.
Building on Flenner's calculations \cite[Section~8]{Fle81}, in this section we will construct free resolutions of the sheaves $ \Omega_U^p $ and extend the contraction pairing $$
\Omega_U^p \otimes \Theta_U^1 \xrightarrow{\gamma} \Omega_U^{p-1}
$$ 
to these resolutions and their associated total \v Cech cohomology complexes. 
\subsection*{The resolutions}
The conormal sequence associated to the closed immersion of the smooth complete intersection $U$ into $ \mathbb{A}^{n+1}\setminus \{ 0 \}$, namely 
$$
0 \to \Ic/\Ic^2 \to \Omega_{\mathbb{A}^{n+1}\setminus\{0\}}^1 \otimes \Oc_U \to \Omega_U^1 \to 0,
$$
is exact and $\Omega_U^1$ is locally free; see \cite[Theorem~II.8.17]{Hartshorne2008}. This uses the smoothness of $U$.
The $ \Oc_U$-module $\Omega_{\mathbb{A}^{n+1}\setminus\{0\}}^1 \otimes \Oc_U $
is free of rank $n+1$ and spanned by the elements $ dx_0,\dots, dx_n $.
The conormal sheaf $ \Ic/\Ic^2$ of the complete intersection $U$ is free of rank $c$ and is generated by the elements $f_1,\dots,f_c$. Hence the conormal sequence 
is described by the exact sequence
\begin{equation} \label{e:conormal complex}
	0 \to F = \bigoplus\limits_{i\in\{1,\dots,c\}} \OU   \cdot y_i  \xrightarrow{\phi}  G =\bigoplus\limits_{j \in\{ 0, \dots n\}} \OU \cdot dx_j \xrightarrow{\pi}  \Omega_U^1 \to 0
\end{equation}
of $\OU$-modules, 
%where $ \deg(dx_i) = w_i$, the elements $y_i$ are basis elements with $ \deg(y_i) = d_i$, 
where the $y_i$ are basis elements, the morphism $\phi$ is the $\OU$-linear map with $$ \phi(y_i) = d(f_i) = \sum_{j=0}^n \partial_{j}(f_i) \cdot dx_j $$ and $\pi$ is the natural surjection. Note, if we set $ \deg(y_i) = d_i$ and $ \deg(dx_i) = w_i $, then the induces morphisms $ \phi_U: \Gamma(U,F) \to \Gamma(U,G) $ and $ \pi_U: \Gamma(U,G) \to \Gamma(U,\Omega_U^1) $ are homogeneous of degree $0$.

For any quasi-coherent $\Oc_U$-module $N$ and $r\in \ZZ_{\geq 0}$, let $ S^r(N) $ denote the $r$-th symmetric power of $N$. 
As the $\Oc_U$-module $F$ is free with a basis $y_1,\dots,y_c $, the symmetric power $ S^r(F) $ is free with a basis formed by the elements
$$
y^{ \lambda} := y_1^{ \lambda_1}\cdot \ldots \cdot y_c^{ \lambda_c}
$$
where $ \lambda \in \ZZ_{\geq 0}^c$ with $\sum  \lambda_i = r$. For the notation $y^{ \lambda}$, we will allow $  \lambda \in \ZZ^c $. Namely,  if $ \lambda_i < 0$ for some $ i $, then we set $ y^{ \lambda} = 0$.  
Similarly to \cite[example~(ii)]{Leb77}, we define the complex
$ (K_p^\bullet,d_{K_p}^\bullet) $ of $\OU$-modules with components $$ K_p^q = S^{-q}(F) \otimes \bigwedge^{p+q} (G) $$ for $-p\leq q \leq 0$ and $K_p^q = 0 $ otherwise and differential 
$$
K_p^q = S^{-q}(F) \otimes \bigwedge^{p+q} (G) \to  K_p^{q+1} = S^{-q-1}(F) \otimes \bigwedge^{p+q+1} (G)
$$
given as the $\OU$-linear map that sends $ y^{\lambda} \otimes \omega $, where $ \lambda \in \ZZ_{\geq 0}^c $ with $ \sum_{i=c} \lambda_i = -q  $ and $ \omega = dx_{i_1}\wedge \dots \wedge dx_{i_{p+q}} $, to
$$
\sum_{i=1}^c y^{\lambda-e_i} \otimes d(f_i) \wedge \omega,
$$
where $ e_i \in \ZZ^c $ denotes the $i$-th standard basis vector. 

By composing it with the natural surjection $ K_p^0 = \bigwedge^p (G) \to \Omega_U^p $,
we get a complex
$$ 0 \to K_p^{-p} \to \dots \to K_p^{0} \to \Omega_U^p  \to 0. $$ 
Note for $ p = 1 $ this is Sequence (\ref{e:conormal complex}).
By dualizing the exact sequence (\ref{e:conormal complex}) of locally free sheaves, we get an exact sequence
$$
0 \to \Theta_U^1 \xrightarrow{\pi^*} G^* = \bigoplus_{i=0}^n \OU \cdot \delta_{i} \xrightarrow{\phi^*}  F^* = \bigoplus_{j=1}^c \OU \cdot y_j^* \to 0,
$$
where the elements $ \delta_0,\dots, \delta_n $ are the dual basis for $ dx_0,\dots,dx_n$ and the elements $ y_1^*,\dots,y_c^*$ are the dual basis for $y_1,\dots,y_c$.
The differential $ \phi^* $  maps $ \delta_i $ to $ \sum_{j=1}^c \partial_{x_i}(f_j) \cdot y_j^*$.
Again, we set $ \deg(\delta_i)= -w_i$ and $ \deg(y_i^*) = -d_i$, so that the sequence becomes homogeneous of degree $0$ on global sections.
We define the complex $ (K_{-1}^\bullet,d_{K_{-1}}^\bullet) $ with components 
$ K_{-1}^0 = G^*$, $ K_{-1}^1 = F^* $ and $ K_{-1}^q = 0 $ if $q\not\in \{0,1\}$ and differential $ \phi^* $. These complexes give the desired resolutions.

\begin{Th}\label{t:Kp is exact}
	In the situation above, for every $p \in \{1,\dots, n-c \} $,
	the complex of $\Oc_U$-modules
	$$ 0 \to K_{p}^{-p} \to \dots \to K_{p}^{0} \to \Omega_U^p  \to 0 $$
	is exact. Furthermore the complex of $\Oc_U$-modules
	$$ 
	0\to \Theta_U^1 \to K_{-1}^0 \to K_{-1}^1 \to 0
	$$
	is exact.	
\end{Th}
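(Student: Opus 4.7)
For the second statement, the complex $0 \to \Theta^1_U \to K_{-1}^0 \to K_{-1}^1 \to 0$ is the $\Oc_U$-linear dual of the short exact sequence~(\ref{e:conormal complex}) of locally free sheaves, so it is automatically exact.

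For the first statement, the augmented complex $K_p^{-p} \to \cdots \to K_p^0 \to \Omega^p_U \to 0$ is a Koszul-type complex built from $\phi\colon F \to G$, and the plan is to show it is a locally free resolution of $\Omega^p_U$. Exactness is local and all the sheaves involved are locally free, so I may work Zariski-locally on $U$, on an open small enough that~(\ref{e:conormal complex}) admits a splitting. Fix such a splitting: it identifies $G$ with $F \oplus E$ where $E := \Omega^1_U$, and identifies $\phi$ with the inclusion of the first summand.

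Under this local splitting, $\bigwedge^{p+q} G$ decomposes as $\bigoplus_{a+b = p+q} \bigwedge^a F \otimes \bigwedge^b E$, and because each $\phi(y_i)$ sits entirely in the $F$-summand, the differential of $K_p^\bullet$ preserves the $\bigwedge^b E$-factor. Hence the augmented complex splits as a direct sum, indexed by an integer $0 \leq j \leq p$, of complexes of the form $C_j^\bullet \otimes \bigwedge^{p-j} E$, where
$$
C_j^q \;:=\; S^{-q}(F) \otimes \bigwedge^{j+q}(F), \qquad -j \leq q \leq 0,
$$
with induced Koszul-type differential $y^\lambda \otimes \omega \mapsto \sum_i y^{\lambda - e_i} \otimes y_i \wedge \omega$. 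The augmentation $\bigwedge^p G \twoheadrightarrow \Omega^p_U = \bigwedge^p E$ lies entirely in the $j=0$ summand, where it gives the tautological identification $\bigwedge^p E = \Omega^p_U$.

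It therefore suffices to prove that for each $j \geq 1$ the complex $C_j^\bullet$ is acyclic. This is the classical acyclicity of the symmetric-exterior Koszul complex on a free module in characteristic zero, provable by a standard contracting homotopy; calculations of exactly this flavour are precisely what is carried out in~\cite[Section~8]{Fle81} and in~\cite{Leb77}. I expect this final acyclicity step, together with the careful bookkeeping of the local decomposition above, to be the main technical content; everything else is formal reduction.
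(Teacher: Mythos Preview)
Your proposal is correct and follows essentially the same route as the paper. Both arguments localize to an affine open where $\Omega^1_U$ is free (equivalently, where the conormal sequence splits) and then reduce to a Koszul-type acyclicity statement; the paper invokes \cite[Theorem~3.1]{Leb77} as a black box, while you unpack the same mechanism via the explicit direct-sum decomposition $K_p^\bullet \cong \bigoplus_j C_j^\bullet \otimes \bigwedge^{p-j} E$ and the classical acyclicity of the symmetric--exterior strands $C_j^\bullet$ for $j\geq 1$. Since you ultimately defer that last step to the same references, the two proofs are effectively identical in content, with yours being slightly more transparent about why the local splitting makes the result immediate.
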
 

\begin{proof}
	We have already proven the second statement and the first statement for $p=1$. Let $ p>1$ and let $ V = \Spec B \sbe U $ be any affine open such that the sheaf $ \Omega^1_U $ restricted to $ V $ is free. Let 
	$ M = \Gamma(V,\Omega_U^1) $ be a free $B$-module. Hence it is $m$-torsion-free (see \cite[Introduction]{Leb77} for definition) for any positive integer 
	$ m\in \ZZ_{>0} $. Hence by applying \cite[Theorem~3.1]{Leb77} 
	(note, since $\mathrm{char}(k) = 0$, the ring $B$ is a $\QQ$-algebra and hence the divided powers used in that reference are isomorphic to symmetric powers) to $ M $ with the free resolution 
	$$
	0 \to \Gamma(V,K_{1}^{-1}) \to  \Gamma(V,K_{1}^{0}) \to M \to 0, 
	$$
	we see that the complex 
	$$
	0 \to \Gamma(V,K_{p}^{-p}) \to \dots \to \Gamma(V,K_{p}^{0}) \to \Gamma(V,\Omega_U^p)  \to 0 
	$$
	is exact. Since  $ \Omega^1_U $ is locally free, we can cover $ U $ with affine opens $V$ such that the restriction is free. So we are done.  
\end{proof}

\subsection*{The pairing of resolutions}
There are $\OU$-bilinear contraction maps 
\begin{align*}
	\tilde{\gamma}_G\colon \quad \bigwedge^q (G) \times G^* &\to  \bigwedge^{q-1} (G) \\ 
	(dx_{i_1} \wedge \dots \wedge dx_{i_q}  ,\theta) &\mapsto \sum_{j = 1}^q (-1)^j \theta(dx_{i_j})  dx_{i_1} \wedge \ldots \hat{dx_{i_j}} \ldots \wedge dx_{i_q} 
\end{align*}
and 
\begin{align*}
	\tilde{\gamma}_F\colon S^{q}(F) \times F^* &\to S^{q-1}(F) \\
	(y^{\lambda}, \mu) &\mapsto \sum_{i=1}^{r} y^{\lambda - e_i}  \mu(y_i).
\end{align*}
These contraction maps induce morphisms 
$$
\id_F \otimes \tilde{\gamma}_G\colon K_p^q \otimes K_{-1}^0 \to K_{p-1}^{q}
$$
and 
$$
\id_G \otimes \tilde{\gamma}_F\colon K_p^q \otimes K_{-1}^{1} \to K_{p-1}^{q+1}.
$$
We define 
$$
\tilde{\gamma}^q\colon \Tot^q\left( K_p^\bullet \otimes K_{-1}^\bullet \right) = K_p^{q-1} \otimes K_{-1}^1 \oplus  K_p^q \otimes K_{-1}^0  \to K_{p-1}^{q} 
$$
as $$\tilde{\gamma}^q = \id_G \otimes \tilde{\gamma}_F \oplus (-1)^{q} \id_F \otimes \tilde{\gamma}_G.$$

These maps are compatible with the differentials and induce $\gamma$; for details on the proof see  \cite{LichtPdh2022}.

\begin{Lemma} \label{lem:pairing_of_complexes} 
	The maps above define a pairing of complexes 
	$$
	\tilde{\gamma}\colon \Tot^\bullet \left( K_p^\bullet \otimes K_{-1}^\bullet \right) \to K_{p-1}^\bullet
	$$
	and induce the contraction pairing, i.e., given sections $\theta$ of $ \Theta_U^1 $ and $\omega' $ of $ \bigwedge^p G $ with $ \omega := \left(\bigwedge^p\pi\right)(\omega')$, we have 
	$$ \gamma(\omega,\theta) = \left(\bigwedge^{p-1}\pi\right) \circ \tilde{\gamma}(\omega',\pi^*(\theta)).$$ 
\end{Lemma}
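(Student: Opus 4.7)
The plan is to verify the two assertions of the lemma separately: first that $\tilde{\gamma} = (\tilde{\gamma}^q)_q$ is a morphism of complexes, and then that it specializes to $\gamma$ after applying the projection $\bigwedge^{p-1}\pi$. Both statements are $\Oc_U$-linear, so I would work locally with the explicit free bases $\{y_i\}$ of $F$ and $\{dx_j\}$ of $G$ and their duals $\{y_i^*\}$, $\{\delta_j\}$, and extend by $\Oc_U$-linearity.

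For the morphism-of-complexes assertion, I would apply $d_{K_{p-1}} \circ \tilde{\gamma}^q$ and $\tilde{\gamma}^{q+1} \circ d_{\Tot}$ to a generic basis element $y^{\lambda} \otimes \omega \otimes \xi \in K_p^q \otimes K_{-1}^t$, treating the cases $t=0$ (so $\xi = \delta_j$) and $t=1$ (so $\xi = y_j^*$) separately. The total differential decomposes as $d_{K_p} \otimes \id + (-1)^q \id \otimes d_{K_{-1}}$, where $d_{K_p}$ inserts $d(f_i) = \sum_k \partial_{x_k}(f_i)\, dx_k$ and lowers the $S(F)$-degree, while $d_{K_{-1}} = \phi^*$ sends $\delta_j$ to $\sum_i \partial_{x_j}(f_i)\, y_i^*$. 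The decisive identities are a graded Leibniz rule for the interior product on exterior powers,
\[
\tilde{\gamma}_G(d(f_i) \wedge \omega, \delta_j) \;+\; d(f_i) \wedge \tilde{\gamma}_G(\omega, \delta_j) \;=\; \pm \partial_{x_j}(f_i)\, \omega,
\]
together with the symmetric-power contraction $\tilde{\gamma}_F(y^{\lambda}, \sum_i \partial_{x_j}(f_i) y_i^*) = \sum_i \partial_{x_j}(f_i)\, y^{\lambda - e_i}$. The $\partial_{x_j}(f_i)$-terms produced by the Leibniz rule match exactly the $\id \otimes d_{K_{-1}}$ contribution after summing over $i$ and $j$, while the remaining terms align with the $d_{K_p} \otimes \id$ contribution; the sign $(-1)^q$ inserted in the definition of $\tilde{\gamma}^q$ is precisely what makes the two sets of contributions combine with matching signs. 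I expect this sign bookkeeping to be the main obstacle, and it is the step the authors defer to \cite{LichtPdh2022}.

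The second assertion is then an immediate consequence of the definitions. For $\omega' \in K_p^0 = \bigwedge^p G$ and $\theta \in \Theta_U^1 \subset G^* = K_{-1}^0$, the element $\omega' \otimes \pi^*(\theta)$ lies in the summand $K_p^0 \otimes K_{-1}^0$ of $\Tot^0$, and the $\id_G \otimes \tilde{\gamma}_F$ component of $\tilde{\gamma}^0$ vanishes there since its second argument would need to live in $F^* = K_{-1}^1$. Therefore
\[
\tilde{\gamma}^0(\omega' \otimes \pi^*(\theta)) \;=\; \tilde{\gamma}_G(\omega', \pi^*(\theta)) \;\in\; \bigwedge^{p-1} G,
\]
and applying $\bigwedge^{p-1}\pi$ yields the classical interior product of $\omega = \bigwedge^p\pi(\omega')$ with the vector field $\theta$, which is $\gamma(\omega,\theta)$ by definition. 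Covering $U$ by affine opens on which $\Omega_U^1$ trivializes and gluing then produces the desired pairing of complexes globally.
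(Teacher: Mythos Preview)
Your proposal is correct and follows the natural approach. The paper itself does not give a proof of this lemma but only states that ``these maps are compatible with the differentials and induce $\gamma$; for details on the proof see \cite{LichtPdh2022},'' so your outline is in fact more detailed than what appears here, and matches what one expects the deferred computation to contain: a direct verification on free generators using the graded Leibniz identity for interior products together with the elementary symmetric-power contraction, followed by the observation that on $K_p^0\otimes K_{-1}^0$ the map $\tilde\gamma^0$ reduces to $\tilde\gamma_G$ and hence descends to the usual contraction after applying $\bigwedge^{p-1}\pi$. Two minor remarks: with the paper's sign convention $\tilde\gamma_G$ is the \emph{negative} of the standard interior product (the sum runs over $(-1)^j$ rather than $(-1)^{j-1}$), which does not affect the argument but should be tracked in the sign bookkeeping; and the final gluing step is not needed, since the maps are already defined globally as morphisms of free $\Oc_U$-modules.
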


\subsection*{The pairing of the total \v Cech complexes}

Let $ \mathcal{U}$ be an open affine covering of $U$. For $p\in \{-1,1,\dots, n-c \}$, let $ \check{C}^\bullet(\mathcal{U}, K_p^\bullet) $ be the \v Cech double complex (as defined in \cite[Tag~01FP]{stkpr}) and let $$ L_p^\bullet = \Tot^\bullet \left(\check{C}^\bullet(\mathcal{U} , K_p^\bullet)\right) $$ be the associated total complex. We consider the cup product map of complexes 
$$
\cup\colon \Tot^\bullet(L_p^\bullet \otimes L_{-1}^\bullet) \to 
\Tot^\bullet(\check{C}^\bullet(\mathcal{U}, \Tot^\bullet(K_p^\bullet \otimes K_{-1}^\bullet) ))
$$
as defined in \cite[Tag~07MB]{stkpr} and compose it with the map 
$$
\Tot^\bullet(\check{C}^\bullet(\mathcal{U}, \Tot^\bullet(K_p^\bullet \otimes K_{-1}^\bullet) )) \to \Tot^\bullet(\check{C}^\bullet(\mathcal{U}, K_{p-1}^\bullet))
$$
induced by the pairing of complexes from Lemma \ref{lem:pairing_of_complexes} to obtain a pairing of complexes
$$
\bar{\gamma}\colon \Tot^\bullet(L_p^\bullet \otimes L_{-1}^\bullet)  \to L_{p-1}^\bullet.
$$

%%%%%%%%%%%%%%%%%%%%%%%%%%%%%%%%%%%
% Cohomology for WCI
%%%%%%%%%%%%%%%%%%%%%%%%%%%%%%%%%%%

\section{Cohomology for weighted complete intersections}

In this section, we explain how to calculate the cohomology of certain coherent sheaves on weighted complete intersections. We give an overview of results on that matter found in \cite{Dol82} and \cite[Section~8]{Fle81}. We start with weighted projective space, where a similar statement can be found in \cite{Dol82}. The proof for the case of usual projective space, found in \cite[Theorem~III.5.1]{Hartshorne2008}, also works in the general case.

\begin{Lemma}\label{L:cohomology_on_U_for_wps}
	Let $k$ be a field, let $W\in \NN^{n+1}$ be weights, let $ {S_W} = k[x_0,\dots,x_n] $ be the weighted polynomial algebra and let $\PP = \PP_k(W) = \Proj {S_W}$ be weighted projective space.
	Then the following statements hold.
	\begin{enumerate}
		\item The natural map $ {S_W} \to \bigoplus_{l\in\ZZ} H^0(\PP,\Oc_\PP(l)) $ is an isomorphism of graded $S_W$-modules.
		\item We have $ \Hrm^q(\PP,\Oc_\PP(l)) = 0 $ for $ q \neq 0,n $ and $l \in \ZZ$.
		\item In \v{C}ech cohomology with respect to the covering $ \mathcal{U} = \{ D_+(x_i) \} $, we have 
		$$
		\bigoplus_{l\in\ZZ}
		\check{\mathrm{H}}^n(\mathcal{U},\PP,\Oc_\PP(l)) = \coker\left( k\langle x_0^{\alpha_0}\dots x_n^{\alpha_n} \, \middle \vert \, \textrm{ there exists } i \textrm{ with } \alpha_i \geq 0 \rangle \to {S_W}[1/x_0,\dots,1/x_n]  \right).
		$$
	\end{enumerate} 
\end{Lemma}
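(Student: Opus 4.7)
My plan is to adapt Hartshorne's proof of Theorem III.5.1 to the weighted setting; the entire argument takes place at the level of the Čech complex and depends only on the ring structure of localisations of $S_W$, so the weights affect only the grading. I will handle all three statements at once by working with the quasi-coherent sheaf $\mathcal{F} = \bigoplus_{l \in \ZZ} \Oc_\PP(l)$ and the cover $\mathcal{U} = \{D_+(x_i)\}_{i=0}^n$. Separatedness of $\PP$ gives $\check{\Hrm}^\bullet(\mathcal{U}, \mathcal{F}) = \Hrm^\bullet(\PP, \mathcal{F})$, and since sections of $\mathcal{F}$ over $D_+(x_{i_0} \cdots x_{i_p})$ are the localisation $(S_W)_{x_{i_0} \cdots x_{i_p}}$, the Čech cochains
\[
C^p = \prod_{i_0 < \cdots < i_p} (S_W)_{x_{i_0} \cdots x_{i_p}}
\]
have a natural $k$-basis of pairs $(I, \alpha)$ with $|I| = p + 1$, $\alpha \in \ZZ^{n+1}$, and $\alpha_j \ge 0$ for $j \notin I$.

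The key move is to decompose $C^\bullet$ into sub-complexes indexed by Laurent monomials. For fixed $\alpha \in \ZZ^{n+1}$, let $J(\alpha) = \{i : \alpha_i < 0\}$; the contributions of $x^\alpha$ to the $C^p$ span a sub-complex preserved by the Čech differential, supported on those $I$ with $I \supseteq J(\alpha)$. Identifying each such $I$ with $I \setminus J(\alpha) \subseteq \{0, \ldots, n\} \setminus J(\alpha)$, this sub-complex agrees (up to a degree shift by $|J(\alpha)|$) with the simplicial cochain complex of the simplex on $n + 1 - |J(\alpha)|$ vertices: non-augmented when $J(\alpha) = \emptyset$, and augmented when $J(\alpha) \neq \emptyset$, with the empty-simplex case $J(\alpha) = \{0, \ldots, n\}$ degenerating to a single copy of $k$.

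Cohomology is then immediate. The non-augmented cochain complex of a simplex contributes only in degree $0$, so monomials with $J(\alpha) = \emptyset$ give $\Hrm^0(\PP, \mathcal{F}) = S_W$ and prove (1); the graded $S_W$-module structure is automatic from the construction. The augmented cochain complex of a non-empty simplex is acyclic, so monomials with $0 < |J(\alpha)| \le n$ contribute nothing, which proves (2). Finally, monomials with all negative exponents contribute a single copy of $k$ in top degree $n$, and rewriting this contribution as the cokernel at the end of the Čech complex yields the formula in (3).

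The only real obstacle is the bookkeeping needed to identify each monomial-indexed sub-complex with a (shifted) simplicial cochain complex and to invoke the contractibility of the simplex. This step is purely combinatorial and independent of the grading, so Hartshorne's argument transfers to the weighted setting without modification.
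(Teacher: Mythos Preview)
Your proposal is correct and is precisely the approach the paper indicates: the paper does not write out a proof but simply remarks that Hartshorne's argument for Theorem~III.5.1 carries over verbatim to the weighted case, and what you have outlined is exactly that argument, with the weights affecting only the grading and not the combinatorics of the monomial decomposition of the \v{C}ech complex.
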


To better handle the top cohomology, we introduce the $k$-dual module.

\begin{Def}
	Let $k$ be a field, let $A$ be a $k$-algebra and let $M$ be a graded $A$-module.
	We define the \emph{$k$-dual module of M} to be the graded $A$-module $ \Drm(M) = \bigoplus_{l\in \ZZ} \Drm(M)_l $
	with $ \Drm(M)_l = \Hom_k(M_{-l},k)$.
\end{Def}

For example, if $ A = {S_W} $ is a weighted polynomial algebra, then 
$ \Drm({S_W})_l = \Hom_k(({S_W})_{-l},k) $. Here $ ({S_W})_{-l} $ is spanned by the monomials $x_0^{\alpha_0}\dots x_n^{\alpha_n} $ with $ \sum \alpha_i W_i = - l$.
We denote the corresponding dual basis elements by $ \phi_{\alpha_0,\dots,\alpha_n} \in \Drm({{S_W}})_l$. 

Note that $\Drm $ defines a contravariant additive self-functor. 
If we assume $A$ to be finitely generated over $k$ (hence noetherian) and restrict $ \Drm $ to the category of finitely generated graded $A$-modules, then it is exact. This is because in that case, the homogeneous components $M_l$ are finite-dimensional $k$-vector spaces. In particular, under the application of $ \Drm $, injections become surjections, kernels become cokernels and vice versa.    

\begin{Rem}\label{r:cohomology_on_U_for_wps}
	Let $ \vWv = \sum W_i$.
	The $k$-vector space $ \check{\mathrm{H}}^n(\mathcal{U},\PP(W),\Oc_{\PP(W)}(l)) $ vanishes if $l > - \vWv $.
	If $ l \leq - \vWv $, then the vector space is spanned by the elements $x_0^{-1-\alpha_0}\dots x_n^{-1-\alpha_n} $ where $\alpha \in (\ZZ_{\geq 0})^{n+1}$ and $ - \vWv - \sum \alpha_i w_i = l$.
	The $k$ linear map 
	\begin{align*}
		\bigoplus_{l\in\ZZ}
		\check{\mathrm{H}}^n(\mathcal{U},\PP(W),\Oc_{\PP(W)}(l)) \to \Drm(S) (\vert W \vert)
	\end{align*}
	that maps $ x_0^{-1-\alpha_0}\dots x_n^{-1-\alpha_n} $ to $ \phi_{\alpha_0,\dots,\alpha_n} $
	defines an isomorphism of graded $S$-modules. 
\end{Rem}

Consider a complete intersection $X = V(f_1,\dots,f_c) \sbe \PP(W) $ of codimension $c$ and degree $d_1,\dots,d_c$ in $\PP(W) $. 
The surjection of coordinate rings $ {{S_W}} \to A = {{S_W}}/(f_1,\dots,f_c)  $ naturally induces an embedding
$ \Drm(A) \sbe \Drm(S)$. For $ r \in \{1,\dots,c\} $ the scheme
$$
X_r = \Proj A_r, \quad A_r = {{S_W}}/(f_1,\dots,f_r)
$$
is a weighted complete intersection of codimension $r$. We have a chain of closed immersions
$$
X = X_c \sbe \dots \sbe X_0 := \PP(W).
$$
For every $ l \in \ZZ$ and $1 \leq r \leq c-1$, the ideal sheaf sequence in $S_W$
$$
0 \to \Oc_{X_r}(l-d_{r+1}) \xrightarrow{\cdot f_{r+1}} \Oc_{X_r}(l) \to \Oc_{X_{r+1}}(l) \to 0 
$$
is exact, as $f_1,\dots,f_{r+1} $ is a regular sequence.
%We write $ \nu_r =  \vert W \vert - \sum_{i=1}^r d_i  $ and $ \nu = \nu_c$.
Considering the associated long exact cohomology sequence and arguing inductively, we can prove the following lemma. (The induction starts with Lemma \ref{L:cohomology_on_U_for_wps} and Remark \ref{r:cohomology_on_U_for_wps}.)
For more details on the proof, we refer to \cite{LichtPdh2022}.

\begin{Lemma} \label{L:cohomology_wpci} Let $ l \in \ZZ $ and let $ X $ be a weighted complete intersection of codimension $c$ as above. Let $ \nu =  \vert W \vert - \sum_{i=1}^c d_i  $.
	Suppose $\dim(X) = n-c \geq 1 $. Then
	\begin{enumerate}
		\item the natural map $ A \to \bigoplus_{l\in\ZZ} \Hrm^0(X,\Oc_{X}(l)) $ is an isomorphism of graded $A$-modules,
		\item $ \mathrm{H}^q(X,\Oc_{X}(l)) = 0 $ for $ q \neq 0,n-c $ and $ l \in \ZZ$, and
		\item $ \bigoplus_{l\in\ZZ} \mathrm{H}^n(X,\Oc_{X}(l)) \cong \Drm(A)  \left( \nu \right) $.
	\end{enumerate} 
\end{Lemma}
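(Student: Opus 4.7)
The plan is to induct on $r \in \{0,1,\dots,c\}$, proving the three statements simultaneously for each $X_r = \Proj A_r$ in the chain $\PP(W) = X_0 \supset X_1 \supset \dots \supset X_c = X$, under the standing assumption $\dim(X_r) = n-r \geq 1$. The base case $r=0$ is weighted projective space: parts (1) and (2) are given by Lemma \ref{L:cohomology_on_U_for_wps}, and part (3) follows from Remark \ref{r:cohomology_on_U_for_wps}, noting $\nu_0 = |W|$ and that the affine cover $\{D_+(x_i)\}$ computes sheaf cohomology.

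For the induction step from $X_r$ to $X_{r+1}$ (with $n-r-1 \geq 1$), I would feed the short exact sequence
$$0 \to \Oc_{X_r}(l-d_{r+1}) \xrightarrow{\cdot f_{r+1}} \Oc_{X_r}(l) \to \Oc_{X_{r+1}}(l) \to 0$$
into its long exact sequence of cohomology and read off the three claims. For (1), the vanishing of $\Hrm^1(X_r, \Oc_{X_r}(l-d_{r+1}))$ (by induction, since $1 \neq 0$ and $1 \neq n-r$) makes the map $\Hrm^0(X_r,\Oc(l)) \to \Hrm^0(X_{r+1},\Oc(l))$ surjective with kernel the image of multiplication by $f_{r+1}$; summing over $l$ gives $A_r/(f_{r+1}) = A_{r+1}$. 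For (2), when $1 \leq q \leq n-r-2$ the flanking groups $\Hrm^q(X_r, \Oc(l))$ and $\Hrm^{q+1}(X_r, \Oc(l-d_{r+1}))$ both vanish by induction, sandwiching $\Hrm^q(X_{r+1}, \Oc(l))$ to zero; the remaining range $q \geq n-r$ is covered by Grothendieck vanishing on $X_{r+1}$.

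For (3), the same LES (with the flanking terms $\Hrm^{n-r-1}(X_r, -)$ vanishing by induction and $\Hrm^{n-r}(X_{r+1}, -)$ vanishing by dimension) degenerates to
$$0 \to \bigoplus_l \Hrm^{n-r-1}(X_{r+1}, \Oc(l)) \to \bigoplus_l \Hrm^{n-r}(X_r, \Oc(l-d_{r+1})) \xrightarrow{\cdot f_{r+1}} \bigoplus_l \Hrm^{n-r}(X_r, \Oc(l)) \to 0.$$
On the algebraic side, since $f_1,\dots,f_c$ is a regular sequence in $S_W$, multiplication by $f_{r+1}$ is injective on $A_r$, yielding $0 \to A_r(-d_{r+1}) \to A_r \to A_{r+1} \to 0$; applying the exact contravariant functor $\Drm$ and shifting by $\nu_{r+1} = \nu_r - d_{r+1}$ produces
$$0 \to \Drm(A_{r+1})(\nu_{r+1}) \to \Drm(A_r)(\nu_{r+1}) \xrightarrow{\cdot f_{r+1}} \Drm(A_r)(\nu_r) \to 0.$$
Combining this with the inductive identification $\bigoplus_l \Hrm^{n-r}(X_r, \Oc(l)) \cong \Drm(A_r)(\nu_r)$, and observing that a cohomological shift by $-d_{r+1}$ corresponds to the algebraic shift $\nu_r \to \nu_{r+1}$, one matches the two four-term sequences and reads off $\bigoplus_l \Hrm^{n-r-1}(X_{r+1},\Oc(l)) \cong \Drm(A_{r+1})(\nu_{r+1})$.

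The principal obstacle I anticipate is the bookkeeping of grading shifts: the identification in (3) must be verified as one of graded $A_{r+1}$-modules, not merely as an abstract isomorphism, and this requires aligning the cohomological twist $(-d_{r+1})$ appearing on the middle term of the LES with the shift $(+d_{r+1})$ introduced by applying the contravariant functor $\Drm$ to the regular-sequence exact sequence. Once this compatibility is tracked carefully, the induction closes cleanly.
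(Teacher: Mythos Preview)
Your proposal is correct and follows exactly the approach the paper sketches: induction along the chain $X_0 \supset \dots \supset X_c$ via the long exact sequence of the ideal-sheaf sequence, with the base case supplied by Lemma~\ref{L:cohomology_on_U_for_wps} and Remark~\ref{r:cohomology_on_U_for_wps}. You have in fact supplied the details the paper defers to \cite{LichtPdh2022}, including the careful tracking of the grading shift in part~(3).
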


\begin{Rem}\label{r:cohomology_wpci}
	If $ A $ is the coordinate ring of a weighted complete intersection $X$ with affine cone $ U = Y \setminus \{0\}$, where $Y= \Spec A$, and $M$ is a graded $A$-module, then there is a natural isomorphism of graded $A$-modules
	$$
	\Hrm^q(U,M^{\sim}\vert_U) \cong \bigoplus\limits_{l\in\ZZ} \Hrm^q(X,(M(l))^\sim),
	$$ 
	where $ M(l) $ denotes the module $ M $ with grading shifted by $ l $, and $ (\_)^\sim$ denotes the functor that associates to an $A$-module its associated $ \Oc_Y $-module (respectively its associated graded $ \Oc_X$-module). This isomorphism can be established by compairing the \v{C}ech cohomology with respect to the coverings $ \{ D(x_i) \} $ for $U$ and $ \{ D_+(x_i) \} $ for $X$ (see \cite{Licht2022}) or with methods of local cohomology (see \cite[Section~8]{Fle81}). We can use this identification to bring the results above in a more compact form. In particular, we have
	$$\Hrm^q(U,\Oc_U) \cong \bigoplus_{l\in\ZZ} \mathrm{H}^q(X,\Oc_{X}(l)). $$
\end{Rem}

\section{The Jacobi ring of a weighted complete intersection}
\label{s:Jacobi ring}

In this section, we will introduce the Jacobi ring of a weighted complete intersection and explain how cohomology can be expressed in terms of it. Our methods build on Flenner's calculation in \cite[Section 8]{Fle81}.
We continue with notations and conventions from Section \ref{s:pairing of resolutions}.
As all components of the complexes $ K_p^\bullet $ are free, the homology of the associated total complex of the \v{C}ech double complexes with respect to the affine covering $\mathcal{U}$ calculates the hypercohomology of these complexes, i.e.,
\begin{align*}
	\mathbb{H}^q(U,K_p^\bullet) = \mathrm{H}^q(\Tot^\bullet (\check{C}^\bullet(\mathcal{U} , K_p^\bullet))) = \mathrm{H}^q(L_p^{\bullet});
	%\mathbb{H}^q(U,(L^\bullet)^\sim) &= \mathrm{H}^q(\Tot^\bullet(\check{C}^\bullet(\mathcal{U},(L^\bullet)^\sim)) 
\end{align*}
see \cite[Tag~0FLH]{stkpr}.
The total complex associated to a double complex comes with two filtrations $F_1$ and $F_2$ given by
$$
F_1^r( L_p^q) = \bigoplus_{i+j=q, i\geq r} \check{C}^i(\mathcal{U},K_p^j)
$$
and
$$
F_2^r( L_p^q) = \bigoplus_{i+j=q, j\geq r} \check{C}^i(\mathcal{U},K_p^j);
$$
see \cite[Tag~012X]{stkpr}.
The pairing $\bar{\gamma}$ is compatible with these filtrations.
Hence, by Theorem \ref{t:spectral pairing}, we get pairings of the associated spectral sequences, one for each filtration. 
We denote the spectral sequences associated to the filtered complex $ (L_p^\bullet,F_i) $  by 
$ (E^{\bullet,\bullet}_{i,p,r})_{r\in \ZZ_{\geq 0}}$.
See Section \ref{s:spectral pairing} or \cite[Tag~0130]{stkpr} for formulas for the computation of the pages of these spectral sequences.
We first compute the pairing of spectral sequences associated to the filtration $F_1$. 
By Theorem \ref{t:Kp is exact}, on the first page, we see 
$$
E_{1,p,1}^{s,t} = \mathrm{H}^t(\check{C}^s(\mathcal{U},K_p^\bullet)) = \begin{cases}
	\check{C}^s (\mathcal{U},\Omega_U^p) & \text{if}\ t=0 \\
	0 & \text{otherwise}
\end{cases}
$$
if $p>0$ and
$$
E_{1,-1,1}^{s,t} = \mathrm{H}^t(\check{C}^s(\mathcal{U},K_{-1}^\bullet) = \begin{cases}
	\check{C}^s (\mathcal{U},\Theta_U^1) & \text{if}\ t=0 \\
	0 & \text{otherwise.}
\end{cases}
$$
Therefore, all spectral sequences converge on the second page with 
$$
E_{1,p,\infty}^{s,t} = E_{1,p,2}^{s,t} = \begin{cases}
	\mathrm{H}^s (U,\Omega_U^p)) & \text{if}\ t=0 \\
	0 & \text{otherwise}
\end{cases}
$$
and
$$
E_{1,-1,\infty}^{s,t} = E_{1,-1,2}^{s,t}  =  \begin{cases}
	\mathrm{H}^s (U,\Theta_U^1) & \text{if}\ t=0 \\
	0 & \text{otherwise}
\end{cases}
$$
By Theorem \ref{t:spectral pairing}, there is a pairing induced by $\bar{\gamma}$
$$
\Tot^\bullet(E_{1,p,\infty}^{\bullet,\bullet} \otimes E_{1,-1,\infty}^{\bullet,\bullet} ) \to E_{1,p-1,\infty}^{\bullet,\bullet}.
$$
In particular, we obtain a pairing
$$
\mathrm{H}^{s_1}(U,\Omega_U^p) \otimes \mathrm{H}^{s_2}(U,\Theta_U^1) \to \mathrm{H}^{s_1+s_2}(U,\Omega_U^{p-1}).
$$
We note that it is the pairing induced by the contraction map $ \gamma\colon \Omega_U^p \otimes \Theta_U^1 \to \Omega_U^{p-1} $ on cohomology, see Lemma \ref{lem:pairing_of_complexes}. 
Now, we compute the pairing of spectral sequences associated to the filtration $F_2$.
On the first page, we see 
$$
E_{2,p,1}^{s,t} = \mathrm{H}^t(\check{C}^\bullet(\mathcal{U},K_p^s)) = \mathrm{H}^t(U,K_p^s).
$$
All modules involved in the complex $K_p^\bullet$ are free. So by Lemma \ref{L:cohomology_wpci}, we see that the spectral sequence satisfies $ E_{2,p,1}^{s,t} =  0$ if $t \neq 0,n-c$.
We made the assumption that $ p < n-c $. Hence, we see that the spectral sequences converge on page 2 since the differential never connects non-vanishing parts on later pages. We have
$$
E_{2,p,\infty}^{s,t} = E_{2,p,2}^{s,t} = \begin{cases}
	\rH^s(\rH^t(U,K_p^\bullet)) & \text{if } t \in \{0,n-c\} \\
	0 & \text{otherwise}
\end{cases}.
$$
The pairing 
$$
\Tot^\bullet(E_{2,p,\infty}^{\bullet,\bullet} \otimes E_{2,-1,\infty}^{\bullet,\bullet} ) \to E_{2,p-1,\infty}^{\bullet,\bullet}
$$
is the one induced by the pairing 
$$
\Tot^\bullet(K_p^\bullet \otimes K_{-1}^\bullet) \to K_{p-1}^\bullet
$$
on cohomology. Note that for both filtrations, all spectral sequences converge in such a way that for each integer $m$ there is only one combination of $(s,t)$ depending on $m$ such that  $s+t = m$ and 
$$ \gr^s \rH^m(L_p^\bullet) = E_{i,p,\infty}^{s,t} \neq 0; $$
see Equation (\ref{e:E infinity is graded})). 
That means $$ F^q \rH^m(L_p^\bullet) = \begin{cases} 0 &\text{if } q > s \\
	\rH^m(L_p^\bullet)  &\text{if } q \leq s
\end{cases}$$ and therefore in the diagram 
$$
\rH^m(L_p^\bullet) \xleftarrow{\alpha^{m,s}}  F^s \rH^m(L_p^\bullet) \xrightarrow{\beta^{m,s}} \gr^s \rH^m(L_p^\bullet),
$$
the maps $\alpha^{m,s}$ and $\beta^{m,s}$  are both isomorphisms. We combine Diagram (\ref{e:diagram}) for suitable choices of $i$ and $j$ with the diagram from Theorem \ref{t:spectral pairing} to get a commutative diagram 
$$ 
\begin{tikzcd}
	E_{1,p,\infty}^{n-c-p,0} \ar[d] \ar[r] \otimes E_{1,-1,\infty}^{1,0} & \rH^{n-c-p}(L_p^\bullet) \otimes \rH^1(L_{-1}^\bullet) \ar[d]&  \ar[l] E_{2,p,\infty}^{-p,n-c} \otimes E_{2,-1,\infty}^{1,0} \ar[d]\\
	E_{1,p-1,\infty}^{n-c-p+1,0} \ar[r] & \rH^{n-c-p+1}(L_{p-1}^\bullet)	& \ar[l] E_{2,p-1,\infty}^{1-p,n-c}
\end{tikzcd} 
$$ 
where the horizontal morphisms are isomorphisms. Thus, we have identified the pairings of spectral sequences for the filtrations $F_1$ and $F_2$ with each other.
As shown above, the pairing
$$
E_{1,p,\infty}^{n-c-p,0} \otimes E_{1,-1,\infty}^{1,0} \to E_{1,p-1,\infty}^{n-c-p+1,0}
$$
is identified with the contraction map 
$$
\mathrm{H}^{n-c-p}(U,\Omega_U^p) \otimes \mathrm{H}^{1}(U,\Theta_U^1) \to \mathrm{H}^{n-c-p+1}(U,\Omega_U^{p-1}).
$$
On the other hand, the pairing 
$$
E_{2,p,\infty}^{-p,n-c} \otimes E_{2,-1,\infty}^{1,0} \to E_{1,p-1,\infty}^{1-p,n-c}
$$
is the pairing
$$
\rH^{-p}(\rH^{n-c}(U,K_p^\bullet)) \otimes \rH^{1}(\rH^{0}(U,K_{-1}^\bullet)) \to \rH^{1-p}(\rH^{n-c}(U,K_{p-1}^\bullet))
$$
induced by $\tilde{\gamma}$.
We now explicitly calculate all the cohomology groups involved in this pairing. The group $\rH^{-p}(\rH^{n-c}(U,K_p^\bullet)$ is the kernel of the map 
$$
\Hrm^{n-c}(U,K_p^{-p}) \to \Hrm^{n-c}(U,K_p^{1-p}). 
$$
We compute:
$$
\Hrm^{n-c}(U,K_p^{-p}) = \Hrm^{n-c}(U,(S^p(F))) =\bigoplus\limits_{\sum \beta_i = p} \Hrm^{n-c}(U,\OU) \cdot y^{\beta},
$$
$$
\Hrm^{n-c}(U,K_p^{1-p}) = \bigoplus\limits_{i=0}^n \bigoplus\limits_{\sum \beta_i = p-1} \Hrm^{n-c}(U,\OU) \cdot y^{\beta} dx_i. 
$$
We note that $ \deg(y^{\beta}) = \sum \beta_i d_i $, and $\deg(dx_i) = W_i$. Hence, by Lemma \ref{L:cohomology_wpci} and Remark \ref{r:cohomology_wpci}, we see that
$$
\Hrm^{n-c}(U,K_p^{-p}) = \bigoplus\limits_{\sum \beta_i = p} \Drm \left(A\left(-\nu+\sum \beta_i d_i \right)\right) \cdot y^{\beta},
$$ 
and that 
$$
\Hrm^{n-c}(U,K_p^{-p}) = \bigoplus\limits_{i=0}^n \bigoplus\limits_{\sum \beta_i = p-1} \Drm\left(A\left(-\nu+W_i+\sum \beta_i d_i\right)\right) \cdot y^{\beta} dx_i.
$$
%
%\begin{align*}
%	\Hrm^{n-c}(U,K_p^{-p}) &= \Hrm^{n-c}(U,(S^p(F)) \\
%	&= \bigoplus\limits_{\sum \beta_i = p} \Hrm^{n-c}(U,\OU(-\sum \beta_i d_i)) \cdot y^{\beta}  \\
%	&= \bigoplus\limits_{\sum \beta_i = p} \Drm \left(A\left(-\nu+\sum \beta_i d_i \right)\right) \cdot y^{\beta}, \\
%	\Hrm^{n-c}(U,K_p^{1-p}) &= \bigoplus\limits_{i=0}^n \bigoplus\limits_{\sum \beta_i = p-1} \Hrm^{n-c}(U,\OU(-w_i-\sum \beta_i d_i)) \cdot y^{\beta} dx_i \\
%	&= \bigoplus\limits_{i=0}^n \bigoplus\limits_{\sum \beta_i = p-1} \Drm\left(A\left(-\nu+w_i+\sum \beta_i d_i\right)\right) \cdot y^{\beta} dx_i.
%\end{align*}
By Lemma \ref{l:D is exact},
the kernel of the map $ \Hrm^{n-c}(U,K_p^{-p}) \to \Hrm^{n-c}(U,K_p^{1-p}) $ is the k-dual of the cokernel of the map 
$$
\alpha\colon \bigoplus\limits_{i=0}^n \bigoplus\limits_{\sum \beta_i = p-1} A\left(-\nu+W_i+\sum \beta_i d_i\right) \cdot y^{\beta} dx_i \to \bigoplus\limits_{\sum \beta_i = p}  A\left(-\nu+\sum \beta_i d_i \right) \cdot y^{\beta}
$$
that maps $ y^{\beta}dx_i $ to $ \sum_j \partial_{x_i}(f_j) y^{\beta+e_j} $. To describe the cokernel of this map, we introduce the Jacobi ring. 

\begin{Def}
	Let $X = V(f_1, \dots, f_c) \sbe \PP(W) $ be a weighted complete intersection of multidegree $(d_1,\dots,d_c)$. Let $ k[x_0,\dots,x_n,y_1,\dots,y_n] $ be the polynomial ring with bigrading $ \deg(x_i) = (0,w_i)$, $\deg(y_j) = (1,-d_j)$.
	The polynomial $F = y_1f_1+ \dots+ y_cf_c$ is bihomogeneous of degree $(1,0)$.
	We define the \emph{Jacobi ring of Y} to be the bigraded ring 
	$$
	R = k[x_0,\dots,x_n,y_1,\dots,y_c]/(\partial_{x_0}(F),\dots,\partial_{x_n}(F),\partial_{y_0}(F),\dots,\partial_{y_c}(F)).
	$$
\end{Def}

We see that 
$ \coker(\alpha) $ is the part of $R$ in which we fix the first degree to be $p$. In fact, if we view this part $R_{p,*} $ as a graded module via $\deg_2$, we get an isomorphism 
$$
\coker(\alpha) \cong R_{p,*}(-\nu)
$$ 
of graded modules. This shows 
\begin{align*}
	\rH^{-p}(\rH^{n-c}(U,K_p^\bullet)) &= \ker(\Hrm^{n-c}(U,K_p^{-p}) \to \Hrm^{n-c}(U,K_p^{1-p}))\\
	&= \Drm(\coker(\alpha)) \\ &= \Drm(R_{p,*})(\nu). 
\end{align*}

Next we calculate $ \rH^{1}(\rH^{0}(U,K_{-1}^\bullet)) $. It is the cokernel of the map 
$$
\Hrm^0(G^*) = \bigoplus_{i=0}^n A(W_i) \cdot \delta_{i} \xrightarrow{\phi^*}   \Hrm^0(F^*) = \bigoplus_{j=1}^c A(d_j) \cdot y_j^*,
$$
where the differential maps $ \delta_i $ to $ \sum_{j=1}^c \partial_{x_i}(f_j) \cdot y_j^*$. By Lemma \ref{L:cohomology_wpci} and Remark \ref{r:cohomology_wpci}, we can identify $F^*$ with $ \Hrm^0(U,F^*)$ and $G^*$ with $\Hrm^0(U,G^*)$. Hence it can be identified with the $\deg_1 = 1$ part of the Jacobi ring, namely 
$$
\rH^{1}(\rH^{0}(U,K_{-1}^\bullet))  = R_{1,*}.
$$ 
For $x \in R_{1,*}$, let $m_x\colon R_{p-1,*} \to R_{p,*} $ be the multiplication by $x$.
Now under these identifications, the pairing is explicitly given as 
$$
\Drm(R_{p,*})(\nu) \otimes R_{1,*} \to \Drm(R_{p-1,*})(\nu)\colon \quad \varphi \otimes x \mapsto \varphi \circ m_x.
$$
We have proven the following. 

\begin{Lemma}\label{l:contraction_is_multiplication}
	Let $A$ be the coordinate ring of a quasi-smooth weighted projective complete intersection $X = V(f_1, \dots, f_c) \sbe \PP(W_0,\dots,W_n) $ of degree $(d_1,\dots,d_c)$ with affine cone $U$. Let $ \nu = \sum W_i - \sum d_j $. There are isomorphisms $ \mathrm{H}^{n-c-p}(U,\Omega_U^p) \cong \Drm(R_{p,*})(\nu) $ and
	$ \mathrm{H}^{1}(U,\Theta_U^1) \cong R_{1,*}	$. Under theses isomorphisms the contraction pairing
	$$
	\mathrm{H}^{n-c-p}(U,\Omega_U^p) \otimes \mathrm{H}^{1}(U,\Theta_U^1) \to \mathrm{H}^{n-c-p+1}(U,\Omega_U^{p-1})
	$$
	is the pairing
	$$
	\Drm(R_{p,*})(\nu) \otimes R_{1,*} \to \Drm(R_{p-1,*})(\nu)\colon \quad \varphi \otimes x \mapsto \varphi \circ m_x
	$$
\end{Lemma}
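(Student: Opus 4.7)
The plan is to carry out the two-filtration spectral sequence argument that the preceding discussion has essentially set up. Let $L_p^\bullet = \Tot^\bullet(\check{C}^\bullet(\mathcal{U}, K_p^\bullet))$, for $p \in \{-1\} \cup \{1, \dots, n-c\}$, and consider the two standard filtrations $F_1$, $F_2$ of this double complex. Since $\bar{\gamma}$ is a pairing of complexes compatible with both filtrations, Theorem \ref{t:spectral pairing} produces a pairing on each of the two associated spectral sequences, and the two induced pairings on $\gr^\bullet H^\bullet(L_p^\bullet)$ agree. The strategy is to read the contraction pairing off the $F_1$-spectral sequence and read the Jacobi ring multiplication off the $F_2$-spectral sequence, then to use Theorem \ref{t:spectral pairing} to identify them.

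For the $F_1$-spectral sequence, Theorem \ref{t:Kp is exact} implies the resolutions of $\Omega_U^p$ and $\Theta_U^1$ are acyclic, so the first page is concentrated in row $t = 0$ and equals $\check{C}^s(\mathcal{U}, \Omega_U^p)$ (respectively $\check{C}^s(\mathcal{U}, \Theta_U^1)$). It therefore degenerates at $E_2$, yielding $H^s(U, \Omega_U^p)$ and $H^s(U, \Theta_U^1)$, and by Lemma \ref{lem:pairing_of_complexes} the induced pairing on $E_\infty$ is the contraction pairing in cohomology. For the $F_2$-spectral sequence, all components of $K_p^\bullet$ and $K_{-1}^\bullet$ are free $\Oc_U$-modules, so by Lemma \ref{L:cohomology_wpci} only the rows $t = 0$ and $t = n-c$ survive. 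Under the hypothesis $1 \le p < n-c$, the $d_r$ differentials on subsequent pages cannot connect these two rows, so the spectral sequence again degenerates at $E_2$.

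Next I would compute the relevant $E_2$-terms explicitly. The term $H^{-p}(H^{n-c}(U, K_p^\bullet))$ is the kernel of the map $H^{n-c}(U, K_p^{-p}) \to H^{n-c}(U, K_p^{1-p})$; Lemma \ref{L:cohomology_wpci} and Remark \ref{r:cohomology_wpci} identify these with $k$-duals of graded pieces of $A$, and exactness of $\Drm$ on finitely generated graded modules (Lemma \ref{l:D is exact}) turns the kernel into $\Drm(\coker \alpha)(\nu)$, where $\alpha$ is the partial-derivative map; by construction $\coker \alpha \cong R_{p,*}(-\nu)$. A parallel (easier) computation identifies $H^1(H^0(U, K_{-1}^\bullet))$ with $R_{1,*}$, and a direct check on the contraction maps $\tilde{\gamma}_F, \tilde{\gamma}_G$ shows that the induced $E_\infty$-pairing is $\varphi \otimes x \mapsto \varphi \circ m_x$.

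Finally I would invoke the degeneracy more carefully to match the two. Because only a single $(s,t)$ with $s+t = m$ contributes to $H^m(L_p^\bullet)$ in each of the two spectral sequences, the maps $\alpha^{m,s}$ and $\beta^{m,s}$ of Diagram (\ref{e:diagram}) are isomorphisms. Combining that diagram with the square from Theorem \ref{t:spectral pairing} for both filtrations produces a commutative diagram whose horizontal arrows are all isomorphisms, thereby identifying the contraction pairing with the multiplication pairing on $R$. I expect the main technical obstacle to be keeping naturality straight through this web of identifications — in particular verifying that no sign or index convention slips between the contraction maps $\tilde{\gamma}_F, \tilde{\gamma}_G$, the $k$-duality $\Drm$, and the partial-derivative map $\alpha$ — so that the cokernel of $\alpha$ really is the bigraded component $R_{p,*}(-\nu)$ with its multiplication structure by $R_{1,*}$.
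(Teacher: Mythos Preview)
Your proposal is correct and follows essentially the same approach as the paper: the lemma is stated as a summary of the preceding discussion in Section~\ref{s:Jacobi ring}, and your outline reproduces exactly that two-filtration spectral sequence argument, invoking the same ingredients (Theorem~\ref{t:Kp is exact}, Lemma~\ref{L:cohomology_wpci}, Remark~\ref{r:cohomology_wpci}, the exactness of $\Drm$, Theorem~\ref{t:spectral pairing}, and Diagram~(\ref{e:diagram})) in the same order. Your closing remark about tracking signs and naturality through the identifications is apt, since the paper itself defers some of those verifications to \cite{LichtPdh2022}.
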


\begin{Rem}\label{r:contraction_is_multiplication}
	Giving the pairing $$
	\mathrm{H}^{n-c-p}(U,\Omega_U^p) \otimes \mathrm{H}^{1}(U,\Theta_U^1) \to \mathrm{H}^{n-c-p+1}(U,\Omega_U^{p-1}).
	$$is equivalent to giving a map
	$$
	\mathrm{H}^{1}(U,\Theta_U^1) \to \Hom(\mathrm{H}^{n-c-p}(U,\Omega_U^p), \mathrm{H}^{n-c-p+1}(U,\Omega_U^{p-1}
	)).
	$$
	Under the identifications given in Lemma \ref{l:contraction_is_multiplication}, this is the map 
	$$
	R_{1,*} \to \Hom(\Drm(R_{p,*})(\nu),\Drm(R_{p-1,*})(\nu)) = \Hom(R_{p-1,*}(-\nu),R_{p,*}(-\nu))
	$$
	that sends a homogeneous element $x\in R_{1,*}$ to $m_x$. 
\end{Rem}

%%%%%%%%%%%%%%%%%%%%%%%%%%%%%%%%%%%
% Hodge Structure on V-Varieties
%%%%%%%%%%%%%%%%%%%%%%%%%%%%%%%%%%%
\section{Hodge structure on V-varieties} \label{s:v varieties} 

Following \cite[Section 2.5]{PeS08}, we recall some facts about almost Kähler V-varieties (e.g., quasi-smooth weighted complete intersections).

\begin{Def}
	A complex analytic space $X$ is an \emph{$n$-dimensional V-manifold} if there is an open cover $ X = \bigcup X_i $ such that $ X_i = U_i/G_i $ is the quotient of an open subset $ U_i \sbe \CC^n $ by a finite group $G_i $ acting holomorphically on $X_i$. 
	A V-manifold $X$ is \emph{almost Kähler} if there exists a manifold $Y$ that is bimeromorphic to a Kähler manifold and a \emph{proper modification} $f:Y \to X$, i.e., a proper holomorphic map which is biholomorphic over the complement of a nowhere dense analytic subset.  
\end{Def}

There are generalized sheaves of differentials on almost K\"ahler $V$-manifolds.

\begin{Def}
	Let $ X $ be a $V$-manifold. Let $ i\colon X_{sm} \to X $ be the inclusion map of the smooth locus. Define 
	$$ 
	\tilde{\Omega}^p_X = i_* \Omega^p_{X_{sm}}.
	$$ 
\end{Def}

The cohomology of these sheaves determines a Hodge structure, which coincides with the usual Hodge decomposition in the compact K\"ahler case; see \cite[Theorem~2.43]{PeS08} and its proof.

\begin{Th}\label{t:hodge}
	Let $ X $ be an almost K\"ahler  $ V$-manifold. Then, the complex $ \tilde{\Omega}_X^\bullet $ is a resolution of the constant sheaf $ \CC_X $.
	Furthermore the spectral sequence in hypercohomology 
	$$
	E_1^{p,q} = \Hrm^q(X,\tilde{\Omega}^q_X) \Rightarrow \Hrm^{p+q}(X,\CC)
	$$ 
	degenerates on page 1, and $  \Hrm^l({X,\QQ}) $ admits a Hodge structure of weight $l$ given by
	$$ \Hrm^l(X,\QQ) \otimes \CC = \Hrm^l(X,\CC) = \bigoplus_{p+q=l} \Hrm^q(X,\tilde{\Omega}^q_X). $$ 
\end{Th}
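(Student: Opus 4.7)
My plan is to separate the theorem into two parts: (a) the resolution $\CC_X \to \tilde{\Omega}^\bullet_X$, which is a local statement provable by averaging, and (b) the Hodge-theoretic claims, which I would reduce to the classical K\"ahler case via a smooth proper modification.

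For (a), I would argue locally on a $V$-chart $X = U/G$ with $\pi\colon U \to X$ the quotient map ($U\subseteq \CC^n$ open, $G$ a finite group of biholomorphisms). The singular locus of $X$ has complex codimension at least $2$; combined with Hartogs-type extension for reflexive sheaves, this yields an identification $\tilde{\Omega}^p_X \cong (\pi_*\Omega^p_U)^G$. By the holomorphic Poincar\'e lemma the complex $\Omega^\bullet_U$ is a resolution of $\CC_U$, and the Reynolds averaging operator $\tfrac{1}{|G|}\sum_{g\in G} g^*$ renders the functor of $G$-invariants exact on sheaves of $\CC$-vector spaces. Taking $G$-invariants therefore produces a resolution $\CC_X \to \tilde{\Omega}^\bullet_X$ locally, which patches globally.

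For (b), I would use the almost K\"ahler hypothesis to fix a proper modification $f\colon Y \to X$ with $Y$ bimeromorphic to a K\"ahler manifold; after dominating by a Hironaka desingularisation, I may assume $Y$ itself is K\"ahler. Pullback of forms yields a morphism of filtered complexes
$$
f^*\colon (\tilde{\Omega}^\bullet_X, F) \to (Rf_*\Omega^\bullet_Y, F),
$$
with $F$ the stupid (Hodge) filtration, inducing a morphism of hypercohomology spectral sequences and a $\QQ$-linear map $f^*\colon \Hrm^l(X,\CC) \to \Hrm^l(Y,\CC)$ on abutments. The crucial step is to show that this $f^*$ is injective: for a proper modification, the adjunction morphism $\CC_X \to Rf_*\CC_Y$ admits a left inverse in the derived category (constructed via a trace/integration map, exploiting that $f$ is a proper surjection and biholomorphic over a dense open), which yields the injection. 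The image $V := f^*\Hrm^l(X,\CC) \subseteq \Hrm^l(Y,\CC)$ is then stable under complex conjugation (since $f^*$ is $\QQ$-linear) and under $F^\bullet$ (by the strict compatibility of $f^*$ with the filtrations), hence also under the conjugate filtration $\overline{F}^\bullet$. Intersecting the Hodge decomposition of the K\"ahler manifold $Y$ with $V$ gives a pure Hodge structure of weight $l$ on $\Hrm^l(X,\QQ)$, which is the asserted decomposition. Finally, $E_1$-degeneration is equivalent to the dimension equality $\dim \Hrm^l(X,\CC) = \sum_{p+q=l}\dim \Hrm^q(X,\tilde{\Omega}^p_X)$, which is forced by this decomposition.

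The main obstacle is establishing the injectivity of $f^*$ together with its strict compatibility with the Hodge filtration --- concretely, constructing a trace morphism for the proper modification $f$ and verifying that it is filtered. Once these standard but delicate properties are in hand, the reduction to the K\"ahler case is a formal exercise.
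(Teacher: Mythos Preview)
The paper does not actually prove this theorem: immediately before the statement it simply refers the reader to \cite[Theorem~2.43]{PeS08} and its proof, so there is no argument in the paper to compare against. Your outline is essentially the strategy carried out in that reference (originally due to Steenbrink), so in that sense you are aligned with what the paper invokes.

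Your sketch is sound in its broad strokes, and you have correctly isolated the genuine difficulty. A couple of places where the wording should be tightened: when you say the image $V=f^*\Hrm^l(X,\CC)$ is ``stable under $F^\bullet$ by the strict compatibility of $f^*$'', note that strict compatibility is not automatic from $f^*$ being filtered --- it is precisely what the filtered trace $\tau$ buys you (via $\tau\circ f^*=\id$ forcing $f^*(F^p)=V\cap F^p$). Likewise, the final step deducing $E_1$-degeneration from the decomposition needs one more sentence: the filtered splitting $\tau$ makes $(\Hrm^l(X,\CC),F)$ a filtered direct summand of $(\Hrm^l(Y,\CC),F)$, and $E_1$-degeneration is inherited by filtered summands. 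With those two clarifications your plan is complete; the construction of the filtered trace for a proper modification of a $V$-manifold (which is a rational homology manifold, hence carries a fundamental class and Poincar\'e duality over $\QQ$) is indeed the only substantive input beyond classical Hodge theory.
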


As remarked in \cite[Section 7]{Fle81} there are multiple equivalent ways of defining the $\tilde{\Omega}^p_X $. For us, the identification with the reflexive hull of the usual sheaf of differentials is relevant. 

\begin{Lemma}\label{l:omega tilde is omega bidual}
	Let $k$ be an algebraically closed field and let $ X $ be a normal integral scheme of finite type over $k$ and let $i\colon X_{sm}  \to X$ denote the inclusion of the smooth locus. Then there is a canonical isomorphism 
	$$
	(\Omega_X^p)^{**} \to  i_* \Omega_{X_{sm}}^p .
	$$ 
\end{Lemma}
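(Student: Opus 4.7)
The plan is to exploit two structural facts about normal schemes: that the singular locus $X \setminus X_{sm}$ has codimension at least $2$ in $X$, and that reflexive sheaves on a normal scheme are determined by their restriction to any open subset whose complement has codimension at least $2$. Once we have these, the lemma reduces to the observation that both sides are reflexive and restrict to the same locally free sheaf on $X_{sm}$.

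First I would construct the comparison map. The universal property of K\"ahler differentials gives a restriction map $i^* \Omega_X^p \to \Omega_{X_{sm}}^p$, which by adjunction yields a natural morphism $\Omega_X^p \to i_* \Omega_{X_{sm}}^p$. Next, since $\Omega_{X_{sm}}^p$ is locally free on the smooth scheme $X_{sm}$, it is in particular reflexive; and since $X$ is normal, $\operatorname{codim}(X \setminus X_{sm}) \geq 2$, so the pushforward $i_* \Omega_{X_{sm}}^p$ is itself a coherent reflexive $\Oc_X$-module (this is a standard consequence of Serre's $S_2$ criterion for reflexivity on a normal scheme). Applying the double-dual functor to the morphism above gives
$$
(\Omega_X^p)^{**} \longrightarrow (i_* \Omega_{X_{sm}}^p)^{**} \cong i_* \Omega_{X_{sm}}^p.
$$

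Finally I would check that this is an isomorphism. Both source and target are reflexive, so it suffices to verify the statement after restricting to $X_{sm}$, because any morphism between reflexive sheaves on a normal scheme that is an isomorphism in codimension one (a fortiori, on the whole smooth locus) is an isomorphism; equivalently, any reflexive sheaf $\mathcal{F}$ satisfies $\mathcal{F} \cong i_*(\mathcal{F}|_{X_{sm}})$. On $X_{sm}$, the sheaf $\Omega_X^p$ restricts to $\Omega_{X_{sm}}^p$, which is locally free and therefore canonically isomorphic to its double dual; and $(i_* \Omega_{X_{sm}}^p)|_{X_{sm}} = \Omega_{X_{sm}}^p$ tautologically. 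Under these identifications the map is the identity, so it is an isomorphism on $X_{sm}$, hence on $X$.

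The main obstacle, and the place where some care is needed, is the reflexivity of $i_* \Omega_{X_{sm}}^p$: it relies on $X$ being normal (equivalently, $S_2$ and $R_1$) so that pushforward along the open immersion $i$ from a codimension-$\geq 2$ complement preserves the $S_2$ property. Once this is in place, the two extension-from-$X_{sm}$ arguments match up automatically and no further computation is needed.
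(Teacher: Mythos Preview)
Your proof is correct and follows essentially the same approach as the paper: construct the restriction map $\Omega_X^p \to i_* \Omega_{X_{sm}}^p$, pass to double duals, observe that $i_* \Omega_{X_{sm}}^p$ is already reflexive, and conclude by noting that a morphism of reflexive sheaves on a normal scheme which is an isomorphism outside a closed subset of codimension $\geq 2$ is an isomorphism (the paper cites Hartshorne's \emph{Stable reflexive sheaves}, Proposition~1.6, for this last step). If anything, your justification of the reflexivity of $i_* \Omega_{X_{sm}}^p$ via the $S_2$ property is slightly more explicit than the paper's.
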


\begin{proof}The restriction map $ \Omega_X^p \to i_* \Omega_{X_{sm}}^p $ induces a map of the corresponding reflexive hulls  $ (\Omega_X^p)^{**} \to (i_*\Omega_{X_{sm}}^p)^{**} $.
	As $ \Omega_{X_{sm}}^p $ is reflexive, there is a canonical isomorphism $(i_*\Omega_{X_{sm}}^p)^{**} \cong i_*\Omega_{X_{sm}}^p  $. The induced map 
	$$ (\Omega_X^p)^{**} \to i_*\Omega_{X_{sm}}^p  $$ is a map of reflexive sheaves that restricted to $X_{sm}$ is an isomorphism. Note since $X$ is normal, the complement of the smooth locus $X\setminus X_{sm} $ has a codimension of at least $2$. Hence it is an isomorphism by \cite[Proposition~1.6]{HartshorneReflexive1980}.
\end{proof}

\begin{Rem}\label{r:theta tilde}
	If $ X \sbe \PP_\CC(W) $ is a quasi-smooth weighted projective variety, then $X$ is normal; see \cite[Proposition~1.3.3]{Dol82} for the case $ X = \PP_\CC(W) $, the argument given there, namely that $X$ is the quotient of its smooth (and hence normal) affine cone by a finite group, also applies in the general case. Hence by Lemma \ref{l:omega tilde is omega bidual} the generalized sheaf of differentials $ \tilde{\Omega}^p_X $ is canonically isomorphic to the reflexive hull $ (\Omega_X^p)^{**} $.
	In particular, the tangent sheaf $ \Theta^1_X $ is therefore canonically isomorphic to the dual $\tilde{\Theta}^1_X  := (\tilde{\Omega}^1_X)^*$ of $ \tilde{\Omega}^1_X $. 
\end{Rem}

%%%%%%%%%%%%%%%%%%%%%%%%%%%%%%%%%%%
% Inf Torelli for WCI
%%%%%%%%%%%%%%%%%%%%%%%%%%%%%%%%%%%

\section{Infinitesimal Torelli for weighted complete intersections}\label{s:inf Torelli for wci}

In this section, we proof Theorem \ref{t:inf_torelli_map_for_wci}. We continue with notations from Section \ref{s:Jacobi ring}. From now on we choose the base field $k=\CC$.

Let $X = V(f_1, \dots, f_c) \sbe \PP_\CC(W_0,\dots,W_n) $ be a weighted complete intersection of  degree $(d_1,\dots,d_c)$ with affine cone $U$.
Let $A= S_W/(f_1,\dots,f_c)$ be its coordinate ring. Let $Y = \Spec A$, and let $U= Y \setminus \{0\} $ be the affine cone. Let $ \Omega^1_A $ be the sheaf of $\CC$-differentials on $A$, and let $\Omega^p_A = \bigwedge^p \Omega_A^1$. 
We define the \emph{Euler map} as the $A$-linear morphism
$$
\xi\colon \Omega_A^{p} \to \Omega_A^{p-1}
$$
that sends $ dx_{i_1}\wedge\ldots\wedge dx_{i_p} $ to $ \sum_{j=1}^p (-1)^j W_j x_j \cdot dx_{i_1}\wedge\ldots \hat{dx_{i_j}} \ldots\wedge dx_{i_p}$. 
The associated $\Oc_Y$-module $ (\Omega_A^p)^\sim $ is the sheaf of $p$-Forms on $Y$. Hence, we see that $$ (\Omega_A^p)^\sim\vert_U  = \Omega^p_Y\vert_U = \Omega_U^p. $$
Therefore by Remark \ref{r:cohomology_on_U_for_wps}, there is a natural isomorphism 
\begin{equation} \label{I:Omega}
	\Hrm^q(U,\Omega_U^p) \cong \bigoplus_{l\in\ZZ} \Hrm^q(X,(\Omega_A^p(l)^\sim).
\end{equation}

%For any $\OU$-module $M$ and any $l \in \ZZ$, let $ M_l $ denote the degree $l$-component of $M$. The module $ M_l $ has the structure of an $ \OX $-module. Given any morphism $ \alpha\colon M \to N $ of graded $\OU$-modules which is homogeneous of degree zero, the $l$-th component 
%$\alpha_l \colon M_l \to N_l$ is a morphism of $ \OX$-modules.  
%There is an explicit description for the sheaves $\tilde{\Omega}_X^p$ on $X$.

\begin{Lemma}[{\cite[Lemma~8.9]{Fle81}}]\label{l:euler}
	For all integers $l\in \ZZ$, the complex $ ((\Omega_A^\bullet(l))^\sim,\xi) $ of sheaves on $X$ is exact and the kernel of $ (\Omega_A^p(0))^\sim \xrightarrow{\xi} (\Omega_A^{p-1}(0))^\sim $ is canonically isomorphic to $ \tilde{\Omega}_X^p$.
\end{Lemma}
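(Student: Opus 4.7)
The plan is to recognize $\xi$ (up to sign) as contraction $\iota_e$ with the Euler vector field $e = \sum_{i=0}^n W_i x_i\,\partial_{x_i}$, and to deduce both the exactness statement and the identification of the kernel from standard facts about the Koszul complex of a nowhere-vanishing vector field together with descent along the $\mathbb{G}_m$-torsor $p\colon U \to X_{sm}$. A direct check on generators shows $\xi = -\iota_e$ on $\Omega_A^\bullet$. Since each $f_k$ is weighted homogeneous of degree $d_k$, one has $e(f_k) = d_k f_k \in (f_1,\dots,f_c)$, so $e$ descends to a derivation of $A$ and thus a global section of $\Theta_Y^1$ that is tangent to $U$. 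Moreover $e$ is nowhere vanishing on $U$: at any point $u \in U$ some coordinate $x_i(u) \neq 0$, and as $W_i > 0$ the corresponding component $W_i x_i(u)$ of $e$ is nonzero.

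The first step is to reduce exactness to the affine cone. Since the functor $M \mapsto \tilde{M}$ from graded $A$-modules to quasi-coherent sheaves on $X$ is exact and compatible with restriction to $U \subset Y = \Spec A$ (cf.\ Remark \ref{r:cohomology_wpci}), exactness of the complex $((\Omega_A^\bullet(l))^\sim,\xi)$ on $X$ for every $l \in \ZZ$ is equivalent to exactness of the complex $(\Omega_U^\bullet, \iota_e)$ of quasi-coherent sheaves on $U$. Now $U$ is smooth of dimension $n-c+1$ and $e$ is a nowhere-vanishing vector field, so locally on $U$ we may choose étale coordinates $z_1,\dots,z_{n-c+1}$ in which $e = \partial/\partial z_1$. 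Splitting $\Omega_U^1 = \Oc_U\cdot dz_1 \oplus \Omega'$ with $\Omega' = \ker(\iota_e\colon \Omega_U^1 \to \Oc_U)$, the complex $(\bigwedge^\bullet \Omega_U^1, \iota_e)$ becomes the standard Koszul complex for the regular section $dz_1$, which admits $\eta \mapsto dz_1 \wedge \eta$ as a contracting homotopy. This proves exactness.

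The second step identifies $\ker(\xi\colon (\Omega_A^p(0))^\sim \to (\Omega_A^{p-1}(0))^\sim)$ with $\tilde{\Omega}_X^p$. Over the smooth locus $X_{sm} \subset X$, the preimage $p^{-1}(X_{sm}) \subset U$ is a $\mathbb{G}_m$-torsor over $X_{sm}$, and the tangent to the $\mathbb{G}_m$-action is precisely $e$. Dually, this yields the short exact sequence
$$
0 \to p^\ast \Omega^1_{X_{sm}} \to \Omega^1_{p^{-1}(X_{sm})} \xrightarrow{\iota_e} \Oc_{p^{-1}(X_{sm})} \to 0,
$$
and wedging identifies $\ker(\iota_e\colon \Omega_U^p \to \Omega_U^{p-1})|_{p^{-1}(X_{sm})}$ with $p^\ast \Omega^p_{X_{sm}}$. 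Taking $\mathbb{G}_m$-invariants, i.e.\ the degree-$0$ part under the grading, translates via the standard correspondence between graded $A$-modules and sheaves on $X$ into an isomorphism $\ker(\xi)|_{X_{sm}} \cong \Omega^p_{X_{sm}}$. Since $X$ is normal and $X \setminus X_{sm}$ has codimension at least two (quasi-smoothness), and since $\tilde{\Omega}_X^p = i_\ast \Omega^p_{X_{sm}} = (\Omega_X^p)^{\ast\ast}$ by Lemma \ref{l:omega tilde is omega bidual} and Remark \ref{r:theta tilde}, one applies $i_\ast$ and the reflexivity of $\tilde{\Omega}_X^p$ to extend the identification across the singular locus of $X$.

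The main obstacle is the last extension step: a priori $\ker(\xi)$ is only a coherent subsheaf of $(\Omega_A^p(0))^\sim$ and need not be reflexive, so it must be checked that the injection $\ker(\xi) \to i_\ast (\ker(\xi)|_{X_{sm}}) = \tilde{\Omega}_X^p$ is surjective. This can be handled either by a direct local computation in a generator-and-relation presentation of $\Omega_A^p$ near points of $X \setminus X_{sm}$, or by first showing that $\ker(\xi)$ is itself $S_2$ (hence reflexive, since it is torsion-free as a subsheaf of $(\Omega_A^p(0))^\sim$ on the normal scheme $X$) and then invoking that two reflexive sheaves agreeing on $X_{sm}$ are canonically isomorphic; the latter route is the one used by Flenner.
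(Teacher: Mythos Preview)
The paper does not give its own proof of this lemma; it is simply quoted from Flenner \cite[Lemma~8.9]{Fle81}. Your sketch is the standard argument and is essentially what Flenner does: identify $\xi$ with contraction by the Euler field, use that $e$ is nowhere vanishing on $U$ to get exactness of the Koszul-type complex, and identify the degree-$0$ kernel with $\tilde{\Omega}_X^p$ via the $\mathbb{G}_m$-torsor $p^{-1}(X_{sm})\to X_{sm}$ and reflexivity.

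Two small remarks. First, in your reduction step, the passage from exactness on $U$ to exactness of each $((\Omega_A^\bullet(l))^\sim,\xi)$ on $X$ is cleanest phrased stalkwise: for a homogeneous prime $\mathfrak p$ the stalk of $\widetilde{M(l)}$ at the corresponding point of $X$ is the degree-$0$ part of $M_{\mathfrak p}$, and taking degree-$0$ parts is exact; your formulation via ``the functor $M\mapsto\tilde M$ is exact'' is correct but slightly hides this. Second, you are right that the only genuinely nontrivial point is the extension across $X\setminus X_{sm}$, i.e.\ that $\ker(\xi)$ is already reflexive so that the isomorphism over $X_{sm}$ extends. Flenner handles this by showing the relevant sheaves are $S_2$; your identification of this as the crux, and of the two possible routes, is accurate.
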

Lemma \ref{l:euler} gives us short exact sequences 
\begin{equation}\label{e:short exact sequence}
	0 \to \tilde{\Omega}_X^p \to (\Omega_A^p(0))^\sim \xrightarrow{\xi} \tilde{\Omega}_X^{p-1} \to 0.
\end{equation}

There is the following vanishing result.
\begin{Lemma}[{\cite[Lemma~8.10]{Fle81}}]\label{l:Flenner} In the situation above, the following statements hold.
	\begin{enumerate}
		\item We have $\rH^q(U,((\Omega_A^p(l))^\sim) = 0 $, if $p+q \neq n-c, n-c+1$ and $ 0 < q < n-c $.
		\item The map $\Hrm^0(X,(\Omega_A^p(0))^\sim) \xrightarrow{\xi} \Hrm^0(X, \tilde{\Omega}_X^{p-1})$ is surjective if $p \geq 2 $ and has cokernel isomorphic to $\CC$ if $p=1$.
	\end{enumerate}
\end{Lemma}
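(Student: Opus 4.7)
The plan is to treat both parts by re-using the machinery built up in Sections~\ref{s:pairing of resolutions} and~\ref{s:Jacobi ring}, namely the Koszul-type resolution $K_p^\bullet \to \Omega_U^p$ and the hypercohomology spectral sequence of its total \v{C}ech double complex, and to combine this with the Euler sequence of Lemma~\ref{l:euler} for part~(2). Throughout, I will pass to graded pieces via the identification of Remark~\ref{r:cohomology_wpci}, so that it suffices to work with $\Hrm^q(U,\Omega_U^p)$ and read off the statement twist-by-twist.

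For part~(1), I would start with the row-filtration spectral sequence on $L_p^\bullet = \Tot \check{C}^\bullet(\mathcal{U},K_p^\bullet)$ whose $E_1$-page is $E_1^{s,t} = \Hrm^t(U,K_p^s)$. Since every $K_p^s$ is a direct sum of twists of $\Oc_U$, Lemma~\ref{L:cohomology_wpci} combined with Remark~\ref{r:cohomology_wpci} forces $\Hrm^t(U,K_p^s)=0$ outside $t\in\{0,n-c\}$. A bidegree check shows the potentially nonzero higher differentials $d_r$ (which could only connect $t=n-c$ to $t=0$ on page $r=n-c+1$) have target outside the strip $-p\leq s\leq 0$ in the range $p\leq n-c$, so the spectral sequence degenerates at $E_2$. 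Since $s\leq 0$, the $t=0$ row contributes only at total degree $0$, and the $t=n-c$ row contributes only at total degrees $q\in[n-c-p,n-c]$; this immediately yields the vanishing when $p+q\leq n-c-1$. For the remaining range $p+q\geq n-c+2$ (equivalently $s=q-(n-c)\in[-p+2,-1]$), I would dualize: using the $k$-duality functor $\Drm$ and Remark~\ref{r:cohomology_on_U_for_wps}, the complex $\Hrm^{n-c}(U,K_p^\bullet)$ is identified (up to a shift by $\nu$) with the $k$-dual of a Koszul-type complex on the Jacobian sequence $\partial F = (\partial_{x_0}F,\dots,\partial_{x_n}F,\partial_{y_1}F,\dots,\partial_{y_c}F)$ associated to $F = \sum y_i f_i$. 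The quasi-smoothness of $X$ means that this sequence is regular in the appropriate graded sense, so the Koszul complex is exact in all but the two extreme positions; these extreme positions are exactly $s=-p$ (where the Jacobi ring appears, corresponding to $p+q=n-c$) and $s=-p+1$ (corresponding to $p+q=n-c+1$), which are the two values excluded from the hypothesis.

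For part~(2), I would treat $p=1$ by direct computation: the global sections $\Hrm^0(X,(\Omega_A^1(0))^\sim)$ equal the degree-zero part of the graded $A$-module $\Omega_A^1$, and since $\Omega_A^1$ is generated by the $dx_i$ with $\deg(dx_i)=W_i>0$, this is zero. Therefore $\xi^0$ is the zero map, and its cokernel is $\Hrm^0(X,\tilde{\Omega}_X^0) = \Hrm^0(X,\Oc_X) = \CC$. For $p\geq 2$, I would appeal to the short exact sequence~(\ref{e:short exact sequence}) and its long exact cohomology sequence, which shows that $\coker(\xi^0)$ injects into $\Hrm^1(X,\tilde{\Omega}_X^p)$ via the connecting homomorphism out of $\Hrm^0(X,\tilde{\Omega}_X^{p-1})$. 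By the Lefschetz-type theorem for quasi-smooth weighted complete intersections (the Hodge structure of $\Hrm^{p-1}(X,\CC)$ coincides with that of $\PP(W)$ for $p-1<\dim X$), we have $\Hrm^0(X,\tilde{\Omega}_X^{p-1}) = \Hrm^{p-1,0}(X) = 0$ whenever $1\leq p-1\leq n-c-1$, so $\xi^0$ is trivially surjective for $2\leq p\leq n-c$. For $p-1>n-c$ the target sheaf vanishes, and the sole borderline case $p=n-c+1$ can be handled by a direct identification of the top Euler map with $\omega_U\twoheadrightarrow \omega_X$.

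The main obstacle is the Koszul-regularity argument closing part~(1) in the range $p+q\geq n-c+2$: reducing the vanishing to the exactness of an explicit Koszul complex on $\partial F$ and verifying that the extreme cohomologies land exactly at $p+q\in\{n-c,n-c+1\}$ is the delicate step and essentially reproduces Flenner's computation in \cite[Section~8]{Fle81}.
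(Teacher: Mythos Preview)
The paper does not give its own proof of this lemma; it is quoted verbatim as \cite[Lemma~8.10]{Fle81} and then used. So there is nothing in the paper to compare your proposal against directly.

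On the merits of your proposal: your spectral-sequence argument for part~(1) in the range $p+q\leq n-c-1$ is correct and is exactly the computation already carried out in Section~\ref{s:Jacobi ring}. For the range $p+q\geq n-c+2$, your dualization idea is sound in spirit, but the object you obtain after applying $\Drm$ to $\Hrm^{n-c}(U,K_p^\bullet)$ is not literally the Koszul complex of the sequence $(\partial_{x_i}F,\partial_{y_j}F)$; it is (up to twist) the complex $\Hrm^0(U,(K_p^\bullet)^\vee)$, a Lebelt-type complex built from $\phi^*\colon G^*\to F^*$, and its acyclicity in the interior positions requires a separate depth argument rather than regularity of a sequence. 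You are right that this is the delicate step, and it is precisely what Flenner does.

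For part~(2) your argument is correct but slightly overcomplicated. In the case $p\geq 2$ you do not need the connecting map at all: once $\Hrm^0(X,\tilde\Omega_X^{p-1})=0$ the surjectivity is immediate. Moreover you can avoid invoking an external Lefschetz theorem: by Theorem~\ref{t:hodge} the decomposition is a genuine Hodge structure, so $h^{p-1,0}(X)=h^{0,p-1}(X)=\dim\Hrm^{p-1}(X,\Oc_X)$, and the latter vanishes for $0<p-1<n-c$ by Lemma~\ref{L:cohomology_wpci}. This keeps the proof internal to results already established in the paper.
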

These results allow us to calculate the relevant cohomology groups. 
\begin{Lemma}In the situation above, the following identities hold. \label{l:torelli cohomology groups}
	\begin{enumerate}
		\item For $0<p<n-c$:  $$ \Hrm^q(X,\tilde{\Omega}^p_X) = \begin{cases}
			0 &\text{if } 0<q <  n-c-p, q \neq p  \\
			\CC &\text{if } 0<q <  n-c-p, q  = p  \\
			\Hom_\CC(R_{p,-\nu},\CC) &\text{if } q = n-c-p, q \neq p \\
			\CC \oplus \Hom_\CC(R_{p,-\nu},\CC) &\text{if } q = p = n-c-p.
		\end{cases}$$
		\item $$ \Hrm^1(X,{\Theta}_X^{1}) = R_{1,0}. $$
	\end{enumerate}
	
\end{Lemma}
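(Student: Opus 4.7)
The plan is to prove part~(1) by induction on $p$, using the short exact sequence~\eqref{e:short exact sequence} together with the cohomological vanishing and surjectivity results of Lemma~\ref{l:Flenner} and the identification of the top cohomology from Lemma~\ref{l:contraction_is_multiplication}. Part~(2) will follow by extracting the degree-zero component of the graded isomorphism $\Hrm^1(U,\Theta_U^1) \cong R_{1,*}$.

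The base case $p = 0$ uses $\tilde{\Omega}_X^0 = \Oc_X$, for which Lemma~\ref{L:cohomology_wpci} supplies both the interior vanishing and the top cohomology, which equals $\Hom_\CC(R_{0,-\nu},\CC)$ since $R_{0,*} = A$. For the inductive step, I would write down the long exact sequence in cohomology associated to~\eqref{e:short exact sequence}. In the interior range $0 < q < n-c-p$ with $q \geq 2$, Lemma~\ref{l:Flenner}(1) forces both $\Hrm^{q-1}(X,(\Omega_A^p(0))^\sim)$ and $\Hrm^q(X,(\Omega_A^p(0))^\sim)$ to vanish, so the boundary map yields an isomorphism $\Hrm^{q-1}(X,\tilde{\Omega}^{p-1}_X) \cong \Hrm^q(X,\tilde{\Omega}^p_X)$. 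Iterating this $p$ times reduces the computation to $\Hrm^{q-p}(X,\Oc_X)$, which is $\CC$ when $q = p$ (Case~B) and $0$ otherwise (Case~A). The seed $q = 1$ is handled by Lemma~\ref{l:Flenner}(2), which identifies $\Hrm^1(X,\tilde{\Omega}^p_X)$ with the cokernel of $\xi$ on $\Hrm^0$: that cokernel is $\CC$ for $p = 1$ and $0$ for $p \geq 2$, matching the stated cases.

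At the top degree $q = n-c-p$, the middle term $\Hrm^{n-c-p}(X,(\Omega_A^p(0))^\sim)$ is identified with $\Hom_\CC(R_{p,-\nu},\CC)$ by taking the degree-zero component of Isomorphism~\eqref{I:Omega} and invoking Lemma~\ref{l:contraction_is_multiplication}. The relevant fragment of the LES,
\[
\Hrm^{n-c-p-1}(\tilde{\Omega}^{p-1}_X) \to \Hrm^{n-c-p}(\tilde{\Omega}^p_X) \to \Hom_\CC(R_{p,-\nu},\CC) \to \Hrm^{n-c-p}(\tilde{\Omega}^{p-1}_X),
\]
then gives the two remaining cases. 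In Case~C ($p \neq n-c-p$), both outer terms vanish by the inductive hypothesis, giving the claimed isomorphism. In Case~D ($p = n-c-p$, which forces $p \geq 2$ since $n-c > 2$), the left outer term equals $\Hrm^{p-1}(\tilde{\Omega}^{p-1}_X) = \CC$ by Case~B for $p-1$, while the right outer term $\Hrm^p(\tilde{\Omega}^{p-1}_X)$ vanishes by Case~A for $p-1$, producing a short exact sequence $0 \to \CC \to \Hrm^p(\tilde{\Omega}^p_X) \to \Hom_\CC(R_{p,-\nu},\CC) \to 0$ of finite-dimensional $\CC$-vector spaces, which splits.

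For part~(2), Remark~\ref{r:cohomology_wpci} identifies the degree-zero component of the graded module $\Hrm^1(U,\Theta_U^1)$ with $\Hrm^1(X,\tilde{\Theta}_X^1)$, Remark~\ref{r:theta tilde} identifies $\tilde{\Theta}_X^1$ with $\Theta_X^1$, and the degree-zero summand of the graded isomorphism from Lemma~\ref{l:contraction_is_multiplication} then gives $R_{1,0}$. The main obstacle I anticipate is the corner parity subcase of Case~C where $n-c = 2p-1$: the right outer term $\Hrm^{n-c-p}(\tilde{\Omega}^{p-1}_X) = \Hrm^{p-1}(\tilde{\Omega}^{p-1}_X) = \CC$ no longer vanishes by the inductive hypothesis, and deriving the expected $\Hom_\CC(R_{p,-\nu},\CC)$ requires showing the connecting map $\Hom_\CC(R_{p,-\nu},\CC) \to \CC$ is zero. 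I would resolve this by extending the LES by one further step and using the Hodge-theoretic identity of Theorem~\ref{t:hodge} together with the spectral sequences of Section~\ref{s:Jacobi ring} to dualize this corner to an already-established interior case.
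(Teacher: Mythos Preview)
Your induction for part~(1) is essentially the paper's argument: both run the long exact sequence attached to~\eqref{e:short exact sequence}, feed in Lemma~\ref{l:Flenner} for the middle terms, and identify the top cohomology via Lemma~\ref{l:contraction_is_multiplication}. The paper also glosses over the parity corner $n-c=2p-1$ (it says only ``similarly the result is verified in case $p\geq n-c-p-1$''), so your observation that this subcase needs an extra word is fair; your proposed fix via Hodge symmetry is the natural one.

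There is, however, a genuine gap in your argument for part~(2). Remark~\ref{r:cohomology_wpci} does \emph{not} identify the degree-zero piece of $\Hrm^1(U,\Theta_U^1)$ with $\Hrm^1(X,\tilde{\Theta}_X^1)$. Applied to the graded module $\Theta_A=(\Omega_A^1)^*$, that remark yields $\Hrm^1\bigl(X,(\Theta_A(0))^\sim\bigr)$, and $(\Theta_A(0))^\sim$ is not $\tilde{\Theta}_X^1$: the two differ by the Euler vector field, exactly as $(\Omega_A^1(0))^\sim$ differs from $\tilde{\Omega}_X^1$. The paper closes this gap by dualizing sequence~\eqref{e:short exact sequence} for $p=1$ to obtain
\[
0 \to \Oc_X \to (\Theta_A(0))^\sim \to \tilde{\Theta}_X^1 \to 0,
\]
whose long exact sequence reads
\[
\Hrm^1(X,\Oc_X)\to \Hrm^1(U,\Theta_U^1)_0 \to \Hrm^1(X,\Theta_X^1)\to \Hrm^2(X,\Oc_X).
\]
The outer terms vanish by Lemma~\ref{L:cohomology_wpci} precisely because $n-c>2$, and only then does one get $\Hrm^1(X,\Theta_X^1)\cong R_{1,0}$. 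Without this step the hypothesis $\dim X>2$ is never used, which is a sign that your shortcut cannot be right as stated.
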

\begin{proof} We first prove (1).
	We argue by induction on $p$. In each step we consider the long exact cohomology sequences associated to the short exact Sequence (\ref{e:short exact sequence}) and use Lemmata \ref{L:cohomology_wpci}, \ref{l:contraction_is_multiplication}, \ref{l:Flenner} and Isomorphism \ref{I:Omega} to compute certain cohomology groups.
	Let $p=1$. We know $ \tilde{\Omega}_X^0 = {A}^\sim $. Hence, it follows Lemma \ref{L:cohomology_wpci} that $\Hrm^q(X,\tilde{\Omega}_X^0) = 0 $ for $0<q<n-c$.
	The long exact sequence is
	$$
	\begin{tikzpicture}[descr/.style={fill=white,inner sep=1.5pt}]
		\matrix (m) [
		matrix of math nodes,
		row sep=1em,
		column sep=2.5em,
		text height=1.5ex, text depth=0.25ex
		]
		{ 0 & \Hrm^0(X,\tilde{\Omega}_X^1 ) & \Hrm^0(X,(\Omega_A^1(0))^\sim) &  \Hrm^0(X,\tilde{\Omega}_X^{0}) \\
			& \Hrm^1(X,\tilde{\Omega}_X^1 )  & 0						& 0 \\
			& \Hrm^{n-c-2}(X,\tilde{\Omega}_X^1 )   & 0		& 0\\
			& \Hrm^{n-c-1}(X,\tilde{\Omega}_X^1 ) 	& \Hom_\CC(R_{1,-\nu},\CC)						& 0.  \\			
		};
		
		\path[overlay,->, font=\scriptsize,>=latex]
		(m-1-1) edge (m-1-2)
		(m-1-2) edge (m-1-3)
		(m-1-3) edge (m-1-4)
		(m-1-4) edge[out=355,in=175]  (m-2-2)
		(m-2-2) edge (m-2-3)
		(m-2-3) edge (m-2-4)
		(m-2-4) edge[out=355,in=175,dashed]  (m-3-2)
		(m-3-2) edge (m-3-3)
		(m-3-3) edge (m-3-4)
		(m-3-4) edge[out=355,in=175]  (m-4-2)
		(m-4-2) edge (m-4-3)
		(m-4-3) edge (m-4-4);
	\end{tikzpicture}
	$$
	The assertion for $p=1$ immediately follows. Now assume that $2\leq p < n-c-p-1 $ and that the result holds for $p-1$. We see the assertion is also true for $p$ by considering the long exact sequence
	$$
	\begin{tikzpicture}[descr/.style={fill=white,inner sep=1.5pt}]
		\matrix (m) [
		matrix of math nodes,
		row sep=1em,
		column sep=2.5em,
		text height=1.5ex, text depth=0.25ex
		]
		{ 0 & \Hrm^0(X,\tilde{\Omega}_X^p ) & \Hrm^0(X,(\Omega_A^p(0))^\sim) &  \Hrm^0(X,\tilde{\Omega}_X^{p-1}) \\
			& \Hrm^1(X,\tilde{\Omega}_X^p )  & 0						& 0 \\
			& \Hrm^{p-1}(X,\tilde{\Omega}_X^p )  & 0						& \CC \\
			& \Hrm^{p}(X,\tilde{\Omega}_X^p )  & 0						& 0 \\
			& \Hrm^{n-c-p-1}(X,\tilde{\Omega}_X^p )   & 0		& 0\\
			& \Hrm^{n-c-p}(X,\tilde{\Omega}_X^p ) 	& \Hom_\CC(R_{p,-\nu},\CC)						& 0.  \\			
		};
		
		\path[overlay,->, font=\scriptsize,>=latex]
		(m-1-1) edge (m-1-2)
		(m-1-2) edge (m-1-3)
		(m-1-3) edge (m-1-4)
		(m-1-4) edge[out=355,in=175]  (m-2-2)
		(m-2-2) edge (m-2-3)
		(m-2-3) edge (m-2-4)
		(m-2-4) edge[out=355,in=175,dashed]  (m-3-2)
		(m-3-2) edge (m-3-3)
		(m-3-3) edge (m-3-4)
		(m-3-4) edge[out=355,in=175]  (m-4-2)
		(m-4-2) edge (m-4-3)
		(m-4-3) edge (m-4-4)
		(m-4-4) edge[out=355,in=175,dashed]  (m-5-2)
		(m-5-2) edge (m-5-3)
		(m-5-3) edge (m-5-4)
		(m-5-4) edge[out=355,in=175]  (m-6-2)
		(m-6-2) edge (m-6-3)
		(m-6-3) edge (m-6-4)
		;
	\end{tikzpicture}
	$$
	Similarly the result is verified in case $p \geq n-c-p-1 $. 
	
	Now we prove (2). If we dualize Sequence \ref{e:short exact sequence} for $p=1$ and consider the associated long exact sequence,
	we get 
	$$
	\Hrm^1(X,\Theta^0_X) \to \Hrm^1(X,\Theta_U^1)_0 \to \Hrm^1(X,\Theta_X^{1}) \to \Hrm^2(X,\Theta^0_X);
	$$
	see Remark \ref{r:theta tilde}.
	Under the assumption that $2 < n-c $, we have $ \Hrm^1(X,\Theta^0_X) = \Hrm^2(X,\Theta^0_X) = 0 $ and therefore 
	$$
	\Hrm^1(X,{\Theta}_X^{1}) = \Hrm^1(X,\Theta_U^1))_0 = R_{1,0}. $$ 
	This proves the lemma.
\end{proof}

\begin{proof}[Proof of Theorem \ref{t:inf_torelli_map_for_wci}]
	The statement is a combination of Lemma \ref{l:torelli cohomology groups}, Lemma \ref{l:contraction_is_multiplication} and Remark \ref{r:contraction_is_multiplication}.
\end{proof}

%%%%%%%%%%%%%%%%%%%%%%%%%%%%%%%%%%%
% Inf Torelli for hyp Fanos
%%%%%%%%%%%%%%%%%%%%%%%%%%%%%%%%%%%

\section{Infinitesimal Torelli for hyperelliptic Fano threefolds of type (1,1,4)}

\label{s:exlpicit calculation of_inf_torelli_map}

In this section, we will prove Theorem \ref{t:inf torelli for hyperelliptic Fanos} and Theorem \ref{t:atuomorphism action on cohomology of Fanos}.
Any hyperelliptic Fano threefold of Picard rank 1, index 1 and degree 4 over $\CC$ is a weighted complete intersection $$ X = V_+(z^2-f,g) \subset \PP_\CC(1,1,1,1,1,2) = \Proj \CC[x_0,\dots,x_4,z] $$ 
with $ f,g\in \CC[x_0,\dots,x_4] $, $\deg(g)=2$, $\deg(f)=4$; see \cite[Theorem~II.2.2.ii)]{Isk80}.
It is a double cover of the smooth quadric $ V(g) \sbe \PP^4 $ with ramification along the smooth surface $ V(f,g) \sbe \PP^4$.
Since $V(g)$ is a smooth quadric, after a change of coordinates, we may assume $ g = x_0^2+ \dots + x_4^2$.
Write $ h_i = \partial_{x_i}(f)/2 $. Then the Jacobi ring of $X$ is given by 
$$
R =  \CC[x_0,\dots,x_4,z,y_2,y_4]/(f-z^2,g,y_2  x_0 - y_4  h_0, \dots, y_2  x_4 - y_4  h_4, y_4 \cdot z).
$$
We apply Theorem \ref{t:inf_torelli_map_for_wci} to a complete intersection of this type. We calculate $ \nu = 7-6=1 $ and therefore 
$$
\Hrm^1(X,\Theta_X) \cong R_{1,0},\,  \Hrm^{1}(X,\tilde{\Omega}_X^2) \cong \Hom_\CC(R_{1,-1},\CC),\, \Hrm^{2}(X,\tilde{\Omega}_X^1) \cong \Hom_\CC(R_{2,-1},\CC).
$$
There are surjections
$$
y_2 \cdot \CC[x_0,\dots,x_4,z]_2 \oplus y_4  \cdot \CC[x_0,\dots,x_4,z]_4 \to R_{1,0},
$$
$$
y_2 \cdot \CC[x_0,\dots,x_4,z]_1 \oplus y_4  \cdot \CC[x_0,\dots,x_4,z]_3 \to R_{1,-1},
$$
$$
y_2^2 \cdot \CC[x_0,\dots,x_4,z]_3 \oplus y_2y_4  \cdot \CC[x_0,\dots,x_4,z]_5  \oplus y_4^2 \cdot \CC[x_0,\dots,x_4,z]_7  \to R_{2,-1}.
$$
Let $ B = \CC[x_0,\dots,x_4]/(f,g) $. Using the relations $ y_2  x_i = y_4  h_i $ and $ y_4 z = 0 $, we see 
$$
R_{1,0} \cong (y_2 B_2 \oplus \CC \cdot y_2 z \oplus y_4 B_4)/(y_2  x_i - y_4  h_i) \cong \CC \cdot y_2 z \oplus y_4 B_4, 
$$ 
$$
R_{1,-1} \cong \left(y_2 B_1 \oplus y_4 B_3 \right)/(y_2  x_i - y_4  h_i) \cong  y_4 B_3, 
$$ 
$$
R_{2,-1} \cong \left(y_2^2 \cdot B_3 \oplus y_2y_4 B_5 \oplus y_4^2 B_7 \right)/(y_2  x_i - y_4  h_i) \cong y_4^2 B_7.
$$ 
Note that there are injections $$ T_1 := y_2 B_2 \oplus \CC \cdot y_2 z \to R_{1,0} $$ and $$ T_2 := y_2 B_1 \to R_{1,-1}. $$ We will need the following Lemma to prove Theorem \ref{t:atuomorphism action on cohomology of Fanos}.
\begin{Lemma}\label{l:automorphisms}
	If $\varphi \in \Aut(X) $, then there are linear polynomials $ \lambda_i \in k[x_0,\dots,x_4]_1 $ and $b\in \CC^*$ such that, for all 
	$ (x_0:\dots:x_4:z) \in X(\CC) $, we have 
	$$\varphi(x_0:\dots:x_4:z) = (\lambda_0:\dots:\lambda_4:bz).$$
\end{Lemma}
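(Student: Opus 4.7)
The plan is to exploit the canonical character of the anticanonical double cover $\pi\colon X \to Q$ to force every automorphism to respect the weighted projective structure.

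First, I would use that $X$ has Picard rank $1$ and index $1$ to conclude that $H := -K_X = \Oc_X(1)$ is the ample generator of $\Pic(X)$, hence $\varphi^* H = H$ for every $\varphi \in \Aut(X)$. By Lemma \ref{L:cohomology_wpci}, the space $H^0(X, H) = A_1$ is five-dimensional and spanned by $x_0, \dots, x_4$. After fixing an isomorphism $\varphi^* \Oc_X(1) \cong \Oc_X(1)$, the map $\varphi^*$ becomes a linear automorphism of $A_1$, so that $\varphi^*(x_i) = \lambda_i$ for some linear forms $\lambda_i \in \CC[x_0, \dots, x_4]_1$.

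Second, I would show that $\varphi$ commutes with $\iota$. The anticanonical morphism $\pi \colon X \to \PP^4$, $(x\colon z) \mapsto (x_0\colon \dots \colon x_4)$, factors through the double cover $X \to Q = V(g)$ whose non-trivial deck transformation is $\iota$. Letting $\bar\varphi \in \PGL_5(\CC)$ be the linear map $x_i \mapsto \lambda_i$, the previous step gives $\pi \circ \varphi = \bar\varphi \circ \pi$. Conjugating yields
$$
\pi \circ (\varphi \iota \varphi^{-1}) = \bar\varphi \circ (\pi \circ \iota) \circ \varphi^{-1} = \bar\varphi \circ \pi \circ \varphi^{-1} = \pi,
$$
so $\varphi \iota \varphi^{-1}$ is a non-trivial deck transformation of $\pi$ and therefore equals $\iota$.

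Third, I would analyze $\varphi^*$ on $H^0(X, 2H) = A_2 = \CC[x_0,\dots,x_4]_2/(g) \oplus \CC \cdot z$. The involution $\iota^*$ acts as $+1$ on the first summand and as $-1$ on $\CC z$. Since $\varphi^*$ commutes with $\iota^*$ by the previous step, it preserves this eigenspace decomposition, forcing $\varphi^*(z) = bz$ for some $b \in \CC^*$. The graded ring automorphism $\varphi^*\colon A \to A$ is thus determined by $x_i \mapsto \lambda_i$ and $z \mapsto bz$, which translates to the claimed description $\varphi(x_0\colon \dots \colon x_4 \colon z) = (\lambda_0 \colon \dots \colon \lambda_4 \colon bz)$.

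The main obstacle will be the second step: making precise that $\pi \circ \varphi$ and $\bar\varphi \circ \pi$ agree as morphisms to $\PP^4$ (which requires coherent choices of the isomorphism $\varphi^*\Oc_X(1) \cong \Oc_X(1)$), and cleanly extracting the commutation with $\iota$ from the uniqueness of the non-trivial deck transformation. The remaining steps are essentially bookkeeping with the graded ring $A$ via Lemma \ref{L:cohomology_wpci}.
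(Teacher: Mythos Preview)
Your argument is correct and complete; the worry you flag about step~2 is overcautious, since $\pi$ is by construction the morphism attached to the complete linear system $|{-K_X}|$, and the compatibility $\pi\circ\varphi=\bar\varphi\circ\pi$ is exactly the functoriality of that construction once $\varphi^*\Oc_X(1)\cong\Oc_X(1)$ is fixed.

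Your route differs from the paper's in how you pin down the action on $z$. The paper writes $\varphi^*(z)=bz+q$ with $q\in\CC[x_0,\dots,x_4]_2$ a priori, then uses that $(bz+q)^2-f(\lambda_0,\dots,\lambda_4)$ must lie in the ideal $(z^2-f,g)$; reducing modulo $z^2-f$ and matching the coefficient of $z$ forces $g\mid q$, hence $q=0$ in $A$. You instead first prove that $\varphi$ centralizes the covering involution $\iota$ (via the uniqueness of the non-trivial deck transformation of $X\to Q$), and then read off $\varphi^*(z)\in\CC z$ from the $\iota^*$-eigenspace decomposition of $A_2$. The paper's approach is slightly more elementary---just a computation with the defining equations and the Nullstellensatz---while yours is more structural and yields the centralizing property $\varphi\iota=\iota\varphi$ as a byproduct, which is in fact what is used in the subsequent proof of Theorem~\ref{t:atuomorphism action on cohomology of Fanos}(3).
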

\begin{proof}
	The anticanonical bundle of $X$ is isomorphic to $\Oc_{X}(1)$; see \cite[Theorem~3.3.4]{Dol82}.
	The cohomology group $ \Hrm^0(X,\Oc_{X}(1)) $ is a 5-dimensional vector space generated by $ x_0,\dots,x_4 $, and $\Hrm^0(X,\Oc_{X}(2))  $ is a 15-dimensional vector space generated by $ x_0^2,x_0x_1,\allowbreak \dots,x_4^2,z $. Any automorphism $ \varphi \in  \Aut(X) $ induces an automorphism of these cohomology groups. Hence $\varphi$ is of the form 
	$$
	\varphi(x_0,\dots,x_4,z) = (\lambda_0,\dots,\lambda_4,bz+q),
	$$
	where $ \lambda_i \in k[x_0,\dots,x_4]_1 $, $b\in \CC^*$ and $q\in\CC[x_0,\dots,x_4]_2. $
	Note if $ g(x_0,\dots,x_4)=0$, then there is a $ z\in \CC $ such that $ (x_0,\dots,x_4,z) \in X $. This shows $ g(\lambda_0,\dots,\lambda_4) $ vanishes on $ V_+(g)\sbe \PP^4 $.
	By Hilbert's Nullstellensatz, $ g(\lambda_0,\dots,\lambda_4) = \nu g$ for some $\nu\in \CC^*$. Furthermore, again by Hilbert's Nullstellensatz, we see 
	$$
	(bz+q)^2-f(\lambda_0,\dots,\lambda_4) \in (z^2-f,g).
	$$
	It follows that $g$ divides $q$. As $q$ vanishes on $V_+(g)$, we can put $q=0$.
\end{proof}

In particular, the involution coming from the double cover is given by 
$$
\iota\colon 
X \to X\colon \quad (x_0,\dots,x_4,z) \mapsto (x_0,\dots,x_4,-z). 
$$

\begin{proof}[Proof of Theorem {\ref{t:atuomorphism action on cohomology of Fanos}}]
	(1): Consider an automorphism $ \varphi \in \Aut(X) $ as in Lemma \ref{l:automorphisms}. If $\varphi$ operates trivially on $ \Hrm^1(X,\Theta_X^1) $, then it operates trivially on $ T_1 $. Therefore, we have $b=1$ $\varphi(x_ix_j) = x_ix_j$ for all $i,j$. This shows either $ \lambda_i = x_i $ for all $i$ or $ \lambda_i = -x_i $ for all $i$. Note in $\PP(1,1,1,1,1,2)_\CC $, the coordinates $ (x_0:\ldots:x_4,z) $ and $ (-x_0:\ldots:-x_4,z) $ define the same point. 
	Hence $\varphi = \id$.    
	(2): As mentioned in the introduction, this is already proven; see \cite[Proposition~2.12]{JaLoCI2017}.
	
	(3): If $\varphi$ acts trivially on $ \Hrm^3(X,\CC ) $, then it acts trivially on $\Hrm^{2,1}$.
	In particular, such a $\varphi$ then operates trivially on $T_2$.
	Hence, we have $ \lambda_i = x_i $ for $i\in \{0,\dots,4\}$. As $ \varphi $ has to preserve the equation $ z^2-f $, we see $ b \in \{1,-1\}$. This implies $ \varphi \in \{\id,\iota\}$. 
\end{proof}

\begin{proof}[Proof of Theorem {\ref{t:inf torelli for hyperelliptic Fanos}}]
	From the explicit descriptions above, we calculate that the involution invariant part $ H^1(X,\Theta_X)^\iota$ is
	$$
	(R_{1,0})^\iota \cong y_4 B_2.
	$$ 
	Hence by Theorem \ref{t:inf_torelli_map_for_wci}, the involution invariant infinitesimal Torelli map can be identified with the map 
	$$
	B_4 \to \Hom\left(B_3,B_7\right).
	$$
	The sequence $f,g$ is regular as these polynomials define a complete intersection in $ \PP^4 $. We can find polynomials $h_1,h_2,h_3$ such that $ f,g,h_1,h_2,h_3 $ is regular. 
	Note that we can choose these polynomials of arbitrarily large degrees.
	Now, by Macaulay's theorem \cite[Corollary~6.20]{Voisin2003}, the map 
	$$
	\left(\frac{\CC[x_0,\dots,x_4]}{(f,g,h_1,h_2,h_3)}\right)_4 \to \Hom\left(\left(\frac{\CC[x_0,\dots,x_4]}{(f,g,h_1,h_2,h_3)}\right)_3,\left(\frac{\CC[x_0,\dots,x_4]}{(f,g,h_1,h_2,h_3)}\right)_7\right)
	$$
	is injective.
\end{proof}

%----------------------------------------------------------------------------------------
%	BIBLIOGRAPHY
%----------------------------------------------------------------------------------------

\printbibliography[heading=bibintoc]

\end{document}